\newcommand{\R}{\mathbb{R}}
\newcommand{\CC}{\mathcal C}
\newcommand{\norm}[1]{\left\lVert#1\right\rVert}
\newcommand{\ex}[1]{\mathsf{E}}
\newcommand{\Ex}[1]{\mathsf{E}\Big[#1\Big]}
\newcommand{\dif}{\mathsf{d}}
\newcommand{\difr}{\mathsf{d}^{\mathsf R}}
\newcommand{\difI}{\mathsf{d}^{\mathsf{I}}}
\newtheorem{theorem}{Theorem}
\newtheorem{corollary}{Corollary}
\newtheorem{proposition}{Proposition}
\newtheorem{lemma}{Lemma}
\newtheorem{remark}{Remark}
\newtheorem{definition}{Definition}
\title{The relation between mixed and rough SDEs  and its application to numerical methods} % \thanks is optional. Insert line breaks with \\
\author{Andreas Neuenkirch\\
{\small\textsl{Institut f\"ur Mathematik, Universit\"at Mannheim}}\\
{\small\textsl{A5, 6, D-68131 Mannheim, Germany}}\\
{\small\texttt{email: neuenkirch@kiwi.math.uni-mannheim.de}}
\\
Taras Shalaiko\\
{\small\textsl{Institut f\"ur Mathematik, Universit\"at Mannheim}}\\
{\small\textsl{A5, 6, D-68131 Mannheim, Germany}}\\
{\small\texttt{email:tshalaik@uni-mannheim.de}}
\\
 }
\begin{document}
\maketitle
\vspace*{0.5cm}

\noindent {\bf Abstract}\quad {\em 
We study the relationship between mixed stochastic differential equations and  the corresponding rough path equations driven by standard Brownian motion and  fractional Brownian motion with Hurst parameter $H>1/2$. We establish 
a correction formula, which relates both types of equations, analogously to the It\=o-Stratonovich correction formula. This correction formula allows to transfer properties, which are established for one type of equation to the other, and
 we will illustrate this by considering numerical methods for mixed and rough SDEs.}

\smallskip
\noindent \textbf{Keywords.} Stochastic differential equations, rough paths, fractional Brownian motion, correction formula, limit theorem.\\
\smallskip
\noindent\textbf{AMS subject classifications.}60H10;60H35, 60G22.
\section{Introduction}
In this manuscript we will consider two types of stochastic  differential equations (SDEs), the so-called mixed SDEs (see e.g. \cite{kubilius,missh11}) 
and rough SDEs (see e.g. \cite{Lyons-bk, Friz}) driven by standard Brownian motion and fractional Brownian motion with Hurst parameter $H>1/2$.
The mixed SDE reads as
\begin{align}
 X^{M}_t=x_0+\int_0^t a^M(X^M_s)\dif s+\int_0^t b^M(X^M_s) \difI W_s+\int_0^t c^M(X^M_s)\dif B^H_s, \qquad t \geq 0 ,
\label{eqM}
\end{align}
while the corresponding rough path equation is given by
\begin{align}
 X^{R}_t=x_0+\int_0^t a^R(X^R_s)\dif s+\int_0^t b^R(X^R_s) \difr W_s+\int_0^t c^R(X^R_s)\difr B^H_s, \qquad t \geq 0,
 \label{eqRough}
\end{align}
where $W=(W_t)_{t\geq 0}$ is an $m$-dimensional Wiener process, $B^H=(B^H_t)_{t\geq  0 }$ is an $\ell$-dimensional fractional Brownian motion with Hurst index $H>1/2$, $x_0\in \R^d$, and the coefficients $a^M,a^R\colon \R^d\to \R^d$, $b^M,b^R\colon \R^d\to \R^{d\times m}$, $c^M,c^R\colon \R^d\to \R^{d\times \ell}$ satisfy  suitable smoothness assumptions. The precise definitions are given below.\\
The difference between both equations is the definition of the stochastic integrals with respect to the Brownian motion. When dealing with mixed equations, $\int_0^t b^M(X^M_s) \difI W_s$ is understood as
It\=o integral, while for rough equations $\int_0^t b^R(X^R_s) \difr W_s$ corresponds to a Stratonovich integral.
Both equations have been well studied  so far;  in \cite{mbfbm-limit,Shev-Delay} the unique solvability of the mixed SDE was shown, while the well definedness and unique solvability of the rough path SDE have
been obtained  e.g. in \cite{CQ}.

In this manuscript we establish a correction formula (see Sections \ref{rptomixed} and  \ref{mixedtorp})  between equations \eqref{eqM} and \eqref{eqRough}, which extends the It\=o-Stratonovich correction formula that goes back to two articles of W. Wong and M. Zakai (\cite{WZ1,WZ2}).
This allows to transfer results valid for rough SDEs to mixed SDEs and vice versa. We  will illustrate this by establishing the smoothness of the solution map  and by recovering a limit theorem for the mixed SDE \eqref{eqM}
in Section \ref{rptomixed_appl}; furthermore, we point out how this limit theorem can be used to construct and analyse numerical methods for mixed SDEs,
and we show that the ``natural'' Euler scheme for the rough SDE converges to the rough solution.

\medskip

\section{Preliminaries}
\label{Prelim}
\subsection{Notation and Definitions}

In what follows we will work on a filtered probability space $\left(\Omega, \mathcal F, (\mathcal F_t)_{t\geq 0} , \mathbb P\right)$, which is rich enough to contain all the objects defined below.
Let $W= (W^{(1)}_t,\ldots, W^{(m)}_t )_{t\geq 0}$ be a standard $m$-dimensional Wiener process, 
$B^H= (B^{H,(1)},\ldots,B^{H,(\ell)} )_{t\geq 0}$  be an $\ell$-dimensional fractional Brownian motion (fBm) with Hurst index $H\in (1/2,1)$, that is a collection of centered, independent Gaussian processes, independent of $W$ as well, with covariance function
$$
R_H(t,s)=\frac 1 2 \left(t^{2H}+s^{2H}-|t-s|^{2H} \right), \qquad s,t \geq 0.
$$
The Kolmogorov theorem entails that fBm has a modification with $\gamma$-H\"older sample paths for any $\gamma<H$ and we will identify $B^H$ with this modification in the following.

%The integrals in Equation \eqref{eqM} is understood as the following
%\begin{align*}
% X^{M}=x^{(i)}_0+\int_0^t a_i(X^M_s)\dif s+\sum\limits_{j=1}^m \int_0^t b^{(j)}(X^{M}_s)\difI W^{(j)}_s+
% \sum\limits_{j=1}^{l}\int_0^t c^{(j)}(X^M_s)\dif B^{H,(j)}_s
%\end{align*}
%and respectively \eqref{eqRough} as
%\begin{align*}
% X^{R}=x^{(i)}_0+\int_0^t a^R_i(X^R_s)\dif s+\sum\limits_{j=1}^m \int_0^t b^{R,(j)}(X^{R}_s)\difr W^{(j)}_s+\sum\limits_{j=1}^{l}\int_0^t c^{R,(j)}(X^R_s)\difr B^{H,(j)}_s.
%\end{align*}
%where  $d^{(j)}$ denotes the $j$-th column of a matrix, $d\in\{b,c,b^R,c^R\}$. 
%The integrals $\difI W_s$ are standard It\=o integrals, $\dif B^H_s$ are Young-Loeve integrals (defined in the subsection \ref{integratingfbm} ), $\difr W_s$ and $\difr B^H$ are rough path integrals as defined in the subsection (\ref{roughpaths}).

We will use the following standard notation: $|\cdot|$ stands for an absolute value of a real number, the Euclidean norm of a finite dimensional vector or of a matrix. Moreover, for a function $f\colon [a,b]\to \R$ we define the following (semi-)norms:
\begin{align*}
 \norm{f}_{\infty,[a,b]}& =\sup_{x\in [a,b]}|f(x)|,  \qquad 
 \norm{f}_{\gamma,[a,b]}=\sup_{\substack{x,y\in[a,b]\\x\neq y}}\frac{|f(x)-f(y)|}{|x-y|^\gamma},\\
 & \norm{f}_{\gamma,\infty,[a,b]}=\norm{f}_{\gamma,[a,b]}+\norm{f}_{[a,b],\infty}.
\end{align*}
If  there is no ambiguity we will omit an interval index $[a,b]$. For a vector valued function $f=(f_1,\ldots,f_d)\colon [a,b]\to \R^d$ a corresponding (semi-) norm is defined as the sum of the (semi-) norms of the coordinates $f_i$. 
Next, for a function $f\colon [a,b]^2\to \R$, that vanishes on a diagonal, i.e. $f(t,t)=0$ for $t\in [a,b]$, we set 
$$
\norm{f}_{\gamma,[a,b]^2}=\sup_{\substack{t,s\in[a,b]\\t\neq s}}\frac{|f(t,s)|}{|t-s|^\gamma}.
$$
The set of such functions with a finite norm $\norm{f}_{\gamma,[a,b]^2}$ is denoted by $\CC^\gamma_2([a,b]^2)$, that is
 $\CC^\gamma_2([a,b]^2)=\{f\colon [a,b]^2\to \R \mid f(t,t)=0,\,t\in [a,b],\, \norm{f}_{\gamma,[a,b]^2}<\infty \}$. 
 
 Moreover, we will use the notation $C_{b}^{k,\delta}(\R^{d_{1}}; \R^{d_2})$ for functions $f: \R^{m_{1}} \rightarrow \R^{m_2}$, which are bounded, $k$-times differentiable with bounded derivatives and whose $k$-th derivative is H\"older continuous of order $\delta >0$.
 Finally, if a map $c: \R^k \rightarrow \R^{k,m}$ is fixed, we introduce the differential operators $\mathcal D^{(i)}_c=\sum_{l=1}^k c_{l,i}^M(\cdot)\partial_{x_l}$.

\subsection{Integrating with respect to standard Brownian motion and  fBm with $H>1/2$}\label{integratingfbm}

For basic facts for It\=o or Stratonovich integration with respect to standard Brownian motion we refer e.g. to \cite{KS,KP}.

The integral with respect to  a fractional Brownian motion with Hurst parameter $H>1/2$ is understood in the pathwise Young sense. Namely, for $f\in \CC^{\nu}([a,b]; \R)$ and $g\in \CC^{\mu}([a,b];\R)$ with $\nu+\mu>1$ the integral $\int_a^b f(x)\dif g(x)$ can be defined as the limit of its Riemann sums and satisfies the so-called Young inequality
$$        \left|  \int_{a}^{b} (f(s)-f(a) ) \dif  g(s) \right| \leq C_{\nu, \mu }  \| f\|_{\nu, [0,T]} \|g\|_{\mu,[0,T]} |b-a|^{\lambda+ \mu} $$  
for all $a,b \in [0,T]$, where  $C_{\nu,\mu}>0$ is a constant independent of $f$ and $g$. 
 Thus, the integral $\int_a^b f(s)\dif B^{H,(i)}_s$  for a function $f\in \CC^{\beta}([a,b];\R)$ is well defined provided that $\beta>1-H$. More details  can be found  e.g. in \cite{young,Friz}.

\subsection{Mixed SDEs}
The mixed equation \eqref{eqM} reads as
\begin{align} \label{mixed_eq_2}
 X^{M}_t=x_0+\int_0^t a^M(X^M_s)\dif s& +\sum\limits_{j=1}^m \int_0^t b^{M,(j)}(X^{M}_s)\difI W^{(j)}_s \\ &+
 \sum\limits_{j=1}^{\ell}\int_0^t c^{M,(j)}(X^M_s)\dif B^{H,(j)}_s, \qquad t \in [0,T], \nonumber 
\end{align}
where  $(\cdot)^{(j)}$ denotes the $j$-th column of a matrix. As mentioned previously the integrals with respect to the Brownian motions are understood as It\=o integrals, while the integrals 
with respect to the fractional Brownian motions are understood as Young integrals. This equation has been analysed in a series of articles (\cite{kubilius,missh11,  
mbfbm-limit,Shev-Delay}). The most general result on the existence of a  unique solution
can be found in \cite{Shev-Delay}:

\begin{theorem}\label{mixed:main_result}
Assume that
\begin{itemize}
  \item[(i)]  $a^M,  b^{M,(i)},  c^{M,(j)}\in C^{1}(\R^d; \R^d)$, $i=1, \ldots, m$, $j=1, \ldots, \ell$,
   \item [(ii)] $a^M,  b^{M,(i)}, c^{M,(j)}$, $i=1, \ldots, m$, $j=1, \ldots, \ell$, satisfy a linear growth condition, i.e. there exists $C>0$ such that
  $$|a^M(x)| +   \sum_{i=1}^m|b^{M,(i)}(x)|   + \sum_{j=1}^{\ell} |c^{M,(j)}(x)| \leq C(1+ |x|), \qquad x \in \R^d, $$
  \item[(iii)]  $ \sup_{j=1, \ldots, \ell} \sup_{x \in \R^d} |(\operatorname{D} c^{M,(j)})(x)| < \infty$.
\end{itemize}
Then equation \eqref{eqM} has a unique solution, i.e.~there exists a unique continuous and $(\mathcal{F}_t)_{t \in [0,T]}$ adapted process $X=(X_t)_{t \in [0,T]}$, which satisfies  equation \eqref{eqM}
for almost all $\omega \in \Omega$.
\end{theorem}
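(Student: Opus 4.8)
The plan is to interpret the fractional integral pathwise and the Brownian integral in the It\=o $L^2$-sense, and to produce the solution by a fixed-point argument, proving well-posedness first under global Lipschitz and boundedness hypotheses and then removing them by truncation. Throughout I fix a H\"older exponent $\beta\in(1-H,1/2)$, a range that is nonempty precisely because $H>1/2$. Since the sample paths of $W$ and $B^H$ are $\beta$-H\"older on compacts, any continuous candidate solution $X$ will inherit $\beta$-H\"older regularity; as $c^M$ is Lipschitz by assumption (iii), $c^M(X)$ is then $\beta$-H\"older with $\beta>1-H$, so that each $\int_0^t c^{M,(j)}(X_s)\,\dif B^{H,(j)}_s$ is a well-defined Young integral in the sense of Section \ref{integratingfbm} (choosing the path of $B^H$ to be $\gamma$-H\"older with $\gamma\in(1-\beta,H)$). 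This fixes the functional-analytic framework in which the solution is sought.

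First I would treat the auxiliary problem in which $a^M,b^{M,(i)},c^{M,(j)}$ are globally Lipschitz and bounded. On this problem I would run a Picard iteration $X^{n+1}_t=x_0+\int_0^t a^M(X^n_s)\,\dif s+\sum_j\int_0^t b^{M,(j)}(X^n_s)\,\difI W^{(j)}_s+\sum_j\int_0^t c^{M,(j)}(X^n_s)\,\dif B^{H,(j)}_s$ inside a Banach space of continuous adapted processes equipped with a norm that simultaneously controls moments and pathwise H\"older regularity, e.g.
$$\norm{X}_*=\Big(\Ex{\sup_{0\le t\le T}|X_t|^p}\Big)^{1/p}+\Big(\Ex{\norm{X}_{\beta,[0,T]}^p}\Big)^{1/p}$$
for a large exponent $p$. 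The two integral types are estimated by complementary tools: the It\=o increments by the Burkholder--Davis--Gundy inequality together with the Lipschitz bound on $b^{M}$, and the fractional increments by the Young inequality of Section \ref{integratingfbm}, using that $\norm{c^M(X^n)-c^M(X^m)}_{\beta}$ is controlled by $\norm{X^n-X^m}_{\beta}$ and $\norm{X^n-X^m}_{\infty}$ through the Lipschitz and bounded-derivative bounds on $c^M$. Because the Young estimate couples the increment to the H\"older norm of $B^H$ on the interval, I would first obtain a contraction on a short interval $[0,\delta]$ (with $\delta$ chosen in terms of $\norm{B^H}_{\beta,[0,T]}$) and then concatenate finitely many such steps to cover $[0,T]$, yielding existence and uniqueness for the Lipschitz problem.

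To pass to the hypotheses of the theorem I would use a localization argument. For each $N$ I replace the coefficients by globally Lipschitz and bounded truncations that agree with $a^M,b^M,c^M$ on the ball $\{|x|\le N\}$ (possible since the coefficients are $C^1$, hence locally Lipschitz, and since the derivative of $c^M$ is globally bounded the truncation of $c^M$ can be taken Lipschitz with a constant independent of $N$), and let $X^N$ be the corresponding solution from the previous step. Defining the exit times $\tau_N=\inf\{t:|X^N_t|\ge N\}$, the uniqueness already proved shows that the processes $X^N$ are consistent on the stochastic intervals $[0,\tau_N]$, so they patch into a single process up to $\tau_\infty=\lim_N\tau_N$. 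The final point is to show $\tau_\infty\ge T$ almost surely: here the linear growth assumption (ii) feeds, through the same BDG and Young estimates applied on $[0,t\wedge\tau_N]$, into a Gronwall inequality for $\Ex{\sup_{s\le t\wedge\tau_N}|X^N_s|^p}$, giving an a priori moment bound independent of $N$ and hence ruling out explosion. Uniqueness for the original equation follows by the same Gronwall argument applied to the difference of two solutions, localized by stopping times.

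The main obstacle is closing the a priori H\"older estimate for the fractional integral while it is coupled to the probabilistic It\=o integral. The Young estimate demands control of the $\beta$-H\"older norm of the solution, yet the Brownian part caps that regularity at $\beta<1/2$ and injects randomness, so one must run the moment bounds (needed for BDG) and the pathwise H\"older bounds (needed for Young) in tandem; it is exactly the bounded-derivative condition (iii) on $c^M$ that keeps $\norm{c^M(X)}_{\beta}$ under control and lets the drift and Brownian coefficients get by with mere linear growth. A secondary technical difficulty is that the Young integral depends on the whole path rather than on increments alone, which makes the truncation and concatenation steps more delicate than in the purely It\=o setting.
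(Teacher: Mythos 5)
The paper does not prove this theorem at all: it is quoted from Shevchenko (2014), and the construction indicated immediately after the statement is entirely different from yours --- the solution is obtained as the limit in probability of solutions of It\=o SDEs with random coefficients, in which the fractional integral is replaced by the ordinary integral $\int_0^t c^M(X_s^{M,n})\dot B^{H,n}_s\,\dif s$ against the smoothed process $B^{H,n}_t=n\int_{(t-1/n)\vee 0}^t B^H_s\,\dif s$. That Wong--Zakai-type route sidesteps the need to run a fixed point in a space mixing pathwise H\"older control with $L^p(\Omega)$ bounds, which is exactly where your proposal runs into trouble.

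The concrete gap is in your contraction step. You propose a Picard iteration in the Banach space with norm $\norm{X}_*=\big(\ex{}\sup_t|X_t|^p\big)^{1/p}+\big(\ex{}\norm{X}_{\beta}^p\big)^{1/p}$, but then choose the contraction interval $[0,\delta]$ with $\delta$ depending on $\norm{B^H}_{\beta,[0,T]}(\omega)$. A random time horizon is incompatible with a norm defined by an expectation over $\Omega$: the contraction constant for the Young part is of order $\norm{B^H}_{\gamma,[0,\delta]}\delta^{\gamma}$, which cannot be made uniformly small by any deterministic $\delta$ since $\norm{B^H}_{\gamma}$ is unbounded, while a random $\delta$ makes the ambient space $\omega$-dependent and breaks the BDG estimates, which need a fixed interval. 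To repair this you must either condition on $B^H$ (using its independence from $W$ and an enlarged initial $\sigma$-field, which you never invoke) or introduce an additional localization in the adapted, increasing process $t\mapsto\norm{B^H}_{\gamma,[0,t]}$. A related problem infects your non-explosion step: the Young bound produces terms of the form $\norm{B^H}_{\gamma}\,\norm{X}_{\beta,[s,t]}|t-s|^{\gamma+\beta}$, so the Gronwall inequality for $\ex{}\sup_{s\le t\wedge\tau_N}|X_s^N|^p$ does not close --- the expectation of the product of the two dependent random variables forces you into higher moments of $\norm{X}_\beta$ and a genuinely two-tier (pathwise absorption on random small intervals, then Fernique for the moments of $\exp(C\norm{B^H}_\gamma^{1/\gamma})$) argument that your sketch does not confront. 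These obstacles are resolvable (this is essentially the Guerra--Nualart route), but as written the proof does not go through, and it is in any case not the argument the paper relies on.
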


The above solution is in fact obtained as the limit (in probability) of the solutions of  It\=o SDEs with random coefficients, namely of
$$
 X^{M,n}_t=x_0+\int_0^t \big (a^M(X^{M,n}_s)+c^M(X^{M,n}_s) \dot{B}^{H,n}_s \big ) \dif s+\int_0^t b^M(X^M_s) \difI W_s, \qquad t \in [0,T],
$$
where $B^{H,n}_t=n\int_{(t- 1/n)\vee 0}^t B^H_s \dif s$, $t \in [0,T]$, $n=1,2, \ldots$, is a smoothed fBm, see \cite{mbfbm-limit}.

\subsection{Rough paths}
Here we briefly recall some notions of the rough path theory, following the algebraic integration approach
given in \cite{Guba} and the recent monograph \cite{Hai}.
For a detailed exposition the reader is sent to \cite{Lyons-bk,Friz,Guba,Hai}.
\begin{definition} Let $\gamma >1/3$.
  A pair $(x,\mathbf{x})=(x_s,\mathbf{x}_{s,t})_{0\leq t,s\leq T}\in \CC^{\gamma}([0,T]; \R^m)\times \CC^{2\gamma}_2([0,T]^2; \R^{m\times m})$ is called a $\gamma$-rough path if $\mathbf{x}_{t,t}=0$ for
  all $t\in [0,T]$ and for all $0\leq s<u<t\leq T$ we have
  $$
  \mathbf{x}_{s,t}-\mathbf{x}_{s,u}-\mathbf{x}_{u,t}=(x_s-x_u)\otimes(x_t-x_u).
  $$
\end{definition}

The function $(\mathbf{x}_{s,t})_{s,t,\in [0,T]}$ is called L\'evy area.

\begin{remark} \label{rem_levy} \begin{itemize}
                \item[(i)]            
 A L\'evy area for the fractional Brownian motion $B^H$  with Hurst parameter $H>1/2$ is defined as the collection of Young integrals $$\mathbf{B}=( \mathbf{B}_{s,t})_{0\leq s <t\leq T}=\left\{\int_s^t\int_s^u \dif B^{H,(i)}_v\dif B^{H,(j)}_u; \, 1\leq i,j\leq \ell\right \}_{0\leq s<t\leq T}. $$
\item[(ii)] The L\'evy area for standard Brownian  can be constructed using  Stratonovich integration, i.e. $$\mathbf{W}=(\mathbf{W}_{s,t})_{0\leq s<t\leq T}= \left \{\int_{s}^t \int_s^u \circ\, \dif W^{(i)}_v \circ\dif W^{(j)}_u; \, 1\leq i,j\leq m \right\}_{0\leq s<t\leq T},$$
 where $\int \circ \dif W$ denotes the  Stratonovich integral. 
 \item[(iii)] In the same way a L\'evy area for $(t,W_t,B_t)_{t \in [0,T]}$ can be constructed, i.e. using $\mathbf{W}$ for the iterated integrals of $W^{(i)}$ with respect to $W^{(j)}$, $i,j=1, \ldots,m$, and Young integrals for all other iterated integrals.
 \end{itemize}
\end{remark}

%If $(x,\mathbf{x})$ is a $\gamma$-rough path, then the rough path integral of $\{\sigma(x_s)\}_{s\in [0,T]}$ for $\sigma\colon \R^d \to \R^{m\times d}$ can be defined columnwise as the limit
%$$
%\int_0^T \sigma^{(i)}(x_s)\difr x^{(i)}_s=\lim_{|\mathcal P|\to 0}\sum_{t_k\in \mathcal P} \left( \sigma^i(x_{t_k})(x^{(i)}_{t_{k+1}}-x^{(i)}_{t_k})+\sum_{j=1}^m \mathcal D^{(j)} \sigma(x_{t_k})\mathbb{X}_{t_k,t_{k+1}}(j,i)\right).
%$$  
Together with the notion of the L\'evy area the following concept is at the core of the algebraic integration approach of M. Gubinelli.

\begin{definition}
 We say that a path $y\in \CC^\nu([0,T]; \R^k)$ with $ \nu \in (1/3,\gamma]$ is a weakly controlled path based on $x\in \CC^{\gamma}([0,T]; \R^m)$ if the following decomposition holds
 \begin{align} \label{weak:dcp}
 y_t-y_s=z_s(x_t-x_s)+r_{s,t}, \qquad 0 \leq s \leq t \leq T,
 \end{align}
 with $z\in \CC^{\nu}([0,T]; \R^{k\times m})$ and $r\in \CC^{2\nu}_2([0,T]^2;\R^k)$.
\end{definition}
When $(y,z)$ is a weakly controlled path, then the rough integral of $y$ along $x$ can be defined as 
\begin{align} \label{crucial-1}
\int_0^t y^{(i)}\dif x^{(j)}_s=\lim_{|\mathcal P|\to 0}\sum_{t_k\in \mathcal P} \left( y^{(i)}_{t_k}(x^{(j)}_{t_{k+1}}-x^{(j)}_{t_k})+\sum_{\ell=1}^m z_{t_k}(i,\ell)\mathbf{x}_{t_k,t_{k+1}}(\ell,j) \right),
\end{align}
for $i=1, \ldots, k$, $j=1,\ldots, m$, see e.g.~Corollary 2 in \cite{Guba}.
Here the limit is taken over all partitions $\mathcal{P}=\{0=t_{-1}=t_0 <t_1 < \ldots <t_n=t_{n+1}=t\}$ such that $|\mathcal{P}|= \sup_{t_k \in \mathcal{P}} |t_{k}-t_{k-1}| \rightarrow 0$. Weakly controlled paths are stable under smooth transformations:

\begin{proposition}\label{cp:weak-phi}
Let $ (y,z)$ be a weakly controlled path based on $x$ with decomposition (\ref{weak:dcp}), and let
$\varphi\in   C^{2}_b(\R^k;\R^n)$. Then $\varphi(y)$ is  a weakly controlled path based on $x$ with decomposition
$$ \varphi(y_t)-\varphi(y_s)= \hat{z}_s  (x_t-x_s) +\hat{r}_{s,t},$$
with
$$
\hat{z}_s= (\operatorname{D} \varphi)(y_s) z_s, \qquad s \in [0,T].
$$  
\end{proposition}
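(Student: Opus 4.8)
The plan is to combine a first-order Taylor expansion of $\varphi$ with the controlled structure of $y$. Since $\varphi\in C^2_b$, for all $0\le s\le t\le T$ I would write
\begin{align*}
\varphi(y_t)-\varphi(y_s)=(\operatorname{D}\varphi)(y_s)(y_t-y_s)+R^\varphi_{s,t},
\end{align*}
where the remainder has the mean-value form
\begin{align*}
R^\varphi_{s,t}=\int_0^1\big[(\operatorname{D}\varphi)(y_s+\theta(y_t-y_s))-(\operatorname{D}\varphi)(y_s)\big](y_t-y_s)\,\dif\theta.
\end{align*}
Boundedness of $\operatorname{D}^2\varphi$ then yields $|R^\varphi_{s,t}|\le\tfrac12\norm{\operatorname{D}^2\varphi}_\infty|y_t-y_s|^2$, and since $y\in\CC^\nu$ this is bounded by $C|t-s|^{2\nu}$.

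Next I would insert the controlled decomposition $y_t-y_s=z_s(x_t-x_s)+r_{s,t}$ into the linear term, which separates off the Gubinelli derivative:
\begin{align*}
(\operatorname{D}\varphi)(y_s)(y_t-y_s)=(\operatorname{D}\varphi)(y_s)z_s\,(x_t-x_s)+(\operatorname{D}\varphi)(y_s)r_{s,t}.
\end{align*}
Setting $\hat z_s=(\operatorname{D}\varphi)(y_s)z_s$ and $\hat r_{s,t}=(\operatorname{D}\varphi)(y_s)r_{s,t}+R^\varphi_{s,t}$ gives precisely the asserted decomposition.

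It then remains to check the regularity of the two objects. For $\hat z$, the map $s\mapsto(\operatorname{D}\varphi)(y_s)$ is bounded (as $\operatorname{D}\varphi$ is bounded) and $\nu$-H\"older (as $\operatorname{D}\varphi$ is Lipschitz, $\operatorname{D}^2\varphi$ being bounded, composed with the $\nu$-H\"older path $y$), while $z\in\CC^\nu$ is bounded on $[0,T]$; since a product of two bounded $\nu$-H\"older functions is again $\nu$-H\"older, one obtains $\hat z\in\CC^\nu([0,T];\R^{n\times m})$. For $\hat r$, it vanishes on the diagonal because $r_{t,t}=0$ and $R^\varphi_{t,t}=0$, and its two summands are bounded by $\norm{\operatorname{D}\varphi}_\infty\norm{r}_{2\nu,[0,T]^2}|t-s|^{2\nu}$ and by the Taylor estimate above, so $\hat r\in\CC^{2\nu}_2([0,T]^2;\R^n)$. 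Finally $\varphi(y)\in\CC^\nu$ because $\varphi$ is Lipschitz and $y\in\CC^\nu$, confirming that $\varphi(y)$ is a weakly controlled path based on $x$ of the same regularity.

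The main obstacle is the careful bookkeeping of H\"older exponents---in particular confirming that $\hat z$ lands in $\CC^\nu$ rather than being merely continuous, and that $\hat r$ absorbs all the quadratic-in-$(t-s)$ contributions so as to live in $\CC^{2\nu}_2$. Both points reduce to the elementary facts that a Lipschitz function composed with a H\"older function is H\"older and that a product of bounded H\"older functions is H\"older, together with the estimate $|y_t-y_s|^2\le\norm{y}_\nu^2|t-s|^{2\nu}$.
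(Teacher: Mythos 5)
Your proof is correct and complete: the first-order Taylor expansion with integral remainder, the substitution of the controlled decomposition into the linear term, and the regularity checks for $\hat z$ and $\hat r$ are exactly what is needed. The paper itself states this proposition without proof (it is imported from Gubinelli's work), and your argument is precisely the standard one given there, so there is nothing to compare against.
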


Using appropriate estimates
for the integrals the solution to the rough paths equation
$$ d y_t= \sigma(y_t) \dif x_t, \quad t \in [0,T], \qquad y_0 =a \in \R^d,
 $$ with $ \sigma: \R^d\to \R^{d\times m}$ is obtained via a fixed point argument.

\label{roughpaths}
\begin{theorem}
 Suppose that $\kappa \in (1/3 ,\gamma)$, $x\colon [0,T]\to \R^m$ is a $\gamma$-rough path and  let $\sigma\in C_b^{2,\delta}(\R^d;\R^{d\times m})$ such that
 $(2 +\delta)\gamma >1$. Then the equation
 $$
 y_t=a+\int_0^t \sigma(y_s)\dif x_s, \qquad t \in [0,T],
 $$
 possesses a unique solution in the space of the functions $z\in \CC^\kappa([0,T]; \R^d)$ with $z_0=a$. Moreover, $(y,\sigma(y))$ is a weakly controlled path based on $x$.
\end{theorem}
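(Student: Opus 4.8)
The plan is to construct the solution by a Banach fixed point argument carried out in the space of weakly controlled paths, in the spirit of \cite{Guba,Friz}. Fix a time horizon $\tau \leq T$ to be chosen later and let $\mathcal{Q}_\tau$ denote the set of pairs $(y,z)$ that are weakly controlled by $x$ on $[0,\tau]$ in the sense of \eqref{weak:dcp}, with $y_0=a$; equipped with the norm $|z_0|+\norm{z}_{\kappa,[0,\tau]}+\norm{r}_{2\kappa,[0,\tau]^2}$ this is a complete metric space. On $\mathcal{Q}_\tau$ I would define the map
$$\Gamma(y,z)=(\bar y,\sigma(y)), \qquad \bar y_t=a+\int_0^t \sigma(y_s)\,\difr x_s,\quad t\in[0,\tau].$$
The first step is to verify that $\Gamma$ is well defined and maps $\mathcal{Q}_\tau$ into itself. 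Since $\sigma\in C_b^{2,\delta}\subset C^2_b$, Proposition \ref{cp:weak-phi} shows that $\sigma(y)$ is again weakly controlled by $x$ with derivative process $(\operatorname{D}\sigma)(y)z$; hence the rough integral converges through the limit \eqref{crucial-1}, and reading off the local expansion of $\bar y$ identifies $\sigma(y)$ as the derivative process of the first component, so that $\Gamma(y,z)\in\mathcal{Q}_\tau$.

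The second and decisive step is to turn these facts into quantitative estimates that render $\Gamma$ a contraction. Using the sewing/Young bounds underlying \eqref{crucial-1} (Corollary 2 in \cite{Guba}) together with the estimate of Proposition \ref{cp:weak-phi} for the derivative process and remainder of $\sigma(y)$, one bounds $\norm{\sigma(y)}_{\kappa,[0,\tau]}$ and the remainder of $\bar y$ in $\norm{\cdot}_{2\kappa,[0,\tau]^2}$ by the $\mathcal{Q}_\tau$-norm of $(y,z)$ times a positive power $\tau^{\theta}$ of the interval length. Applying the same scheme to $\Gamma(y^1,z^1)-\Gamma(y^2,z^2)$ — which requires expanding the differences $\sigma(y^1)-\sigma(y^2)$ and $(\operatorname{D}\sigma)(y^1)z^1-(\operatorname{D}\sigma)(y^2)z^2$ and invoking the $\delta$-H\"older continuity of the second derivative of $\sigma$ — produces a Lipschitz bound of the form $C\,\tau^{\theta}$ in the $\mathcal{Q}_\tau$-norm. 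The threshold $(2+\delta)\gamma>1$ enters precisely here: it guarantees that the remainder terms generated by the Taylor expansion of $\sigma$ and by the integration map carry a H\"older exponent strictly above $1$, which is what makes the sewing argument converge and yields the factor $\tau^{\theta}$ with $\theta>0$. Because $\sigma$ and its first two derivatives are bounded, the constant $C$ does not depend on $a$, so choosing $\tau=\tau_0$ small enough makes $\Gamma$ a strict contraction, and Banach's theorem delivers a unique fixed point $(y,\sigma(y))$ on $[0,\tau_0]$, the desired local solution.

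Because $\tau_0$ may be taken uniform in the starting value, the third step is a routine patching: restart the construction at $\tau_0$ from $y_{\tau_0}$, continue over $[\tau_0,2\tau_0]$, and iterate to cover $[0,T]$ in finitely many steps; concatenating the local decompositions shows that the resulting $y\in\CC^\kappa([0,T];\R^d)$ together with $\sigma(y)$ is weakly controlled on all of $[0,T]$, which is the final assertion of the theorem. Global uniqueness follows from the local uniqueness of the fixed point: two solutions coincide on $[0,\tau_0]$ by the contraction property, and the argument restarts verbatim on each successive subinterval. I expect the genuine obstacle to be the contraction estimate of the second step — in particular, controlling the difference of the two rough integrals in the controlled-path norm, since one must track how differences of $(y,z)$ propagate through $\sigma$, $\operatorname{D}\sigma$ and the L\'evy-area correction and check that every resulting term carries a strictly positive power of $\tau$; this is exactly where the assumption $\sigma\in C_b^{2,\delta}$ and the condition $(2+\delta)\gamma>1$ are genuinely used.
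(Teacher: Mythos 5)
Your proposal follows the standard Gubinelli-style Banach fixed point argument in the space of weakly controlled paths, which is precisely the approach the paper itself indicates (the theorem is quoted from \cite{Guba,Friz,Hai} with only the remark that the solution ``is obtained via a fixed point argument'' using estimates for the rough integral). Your outline is sound and correctly identifies where $(2+\delta)\gamma>1$ and the boundedness of $\sigma$ and its derivatives are used, so it matches the paper's (cited) proof in essence, modulo carrying out the contraction estimates in detail.
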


As a consequence of this Theorem  and Proposition \ref{cp:weak-phi} we have
\begin{align} \label{crucial} &
\int_0^t \sigma^{(i)}(y_s)\dif x^{(i)}_s \\ \nonumber & \qquad =\lim_{|\mathcal P|\to 0}\sum_{t_k\in \mathcal P} \left( \sigma^{(i)}(y_{t_k})(x^{(i)}_{t_{k+1}}-x^{(i)}_{t_k})+\sum_{\ell=1}^m \mathcal D^{(\ell)}_{\sigma} \sigma^{(i)}(y_{t_k})\mathbf{x}_{t_k,t_{k+1}}(\ell,i)\right).
\end{align}

The solution map for a rough equation is locally Lipschitz continuous with respect to the initial value and the driving signal. More precisely, we have:
\begin{theorem}\label{stability} Let $\kappa \in (1/3,\gamma)$, $\sigma\in C_b^{2,\delta}(\R^d;\R^{d\times m})$ such that
 $(2 +\delta)\gamma >1$, $a,\tilde{a}\in \R^d$,  and let $x$, $\tilde{x}$ be $\gamma$-rough paths with  corresponding L\'evy areas $\mathbf{x}$, $\tilde{\mathbf{x}}$.
Finally, let $(y_t)_{t\in[0,T]}$, $(\tilde{y}_t)_{t\in[0,T]}$ be the solutions of  the RDEs
 \begin{align*}
  y_t=a+\int_0^t \sigma(y_s)\dif x_s, \quad
  \tilde{y}_t=\tilde{a}+\int_0^t\sigma(\tilde{y}_s)\dif \tilde{x}_s, \qquad t \in [0,T].
 \end{align*}
Then there exist an increasing function $C_T\colon [0, \infty) \to [0, \infty)$ such that
\begin{align*}
 \norm{y-\tilde y}_{\gamma,\infty,[0,T]}\leq C_T\big(\norm{x}_{\gamma,\infty,[0,T]}+\norm{\tilde x}_{\gamma,\infty,[0,T]}+\norm{\mathbf{x}}_{\gamma,[0,T]}+\| \tilde{\mathbf{x}}\|_{\gamma,[0,T]} \big)\\
 \times (|a-\tilde a|+\norm{x-\tilde x}_{\gamma,\infty,[0,T]}+\|\mathbf{x}-\tilde{\mathbf{x}}\|_{2\gamma,[0,T]}).
\end{align*}
\label{convTheorem}
\end{theorem}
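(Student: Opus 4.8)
The plan is to follow the fixed-point strategy of the algebraic integration approach, localising in time and then patching the local estimates over $[0,T]$. By the existence theorem stated above, both solutions are weakly controlled paths, $(y,\sigma(y))$ based on $x$ and $(\tilde y,\sigma(\tilde y))$ based on $\tilde x$, with Gubinelli derivatives $z=\sigma(y)$, $\tilde z=\sigma(\tilde y)$. The quantity I would actually control is the distance between these two controlled paths in the natural metric combining $\norm{y-\tilde y}_{\gamma,\infty}$, the derivative difference $\norm{\sigma(y)-\sigma(\tilde y)}_{\gamma,\infty}$, and the remainder difference $\norm{r-\tilde r}_{2\gamma}$; the final $\gamma$-norm estimate is read off from this, and the initial data enter through $y_0=a$, $\tilde y_0=\tilde a$.

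First I would record a priori bounds. Using the estimates behind the existence theorem, each solution satisfies $\norm{y}_{\gamma,\infty}\leq C_T(\norm{x}_{\gamma,\infty}+\norm{\mathbf x}_{2\gamma})$, with analogous bounds for its remainder and its derivative, and with $C_T$ increasing in the signal norms. These bounds enter all subsequent Lipschitz estimates as multiplicative constants. Second — and this is the technical heart — I would prove a stability estimate for the rough integral. Starting from the compensated Riemann sums in \eqref{crucial-1}, I would apply the sewing lemma to the difference of the two integrands, that is to
$$ \big(\sigma^{(i)}(y_{t_k})(x^{(i)}_{t_{k+1}}-x^{(i)}_{t_k})+\cdots\big)-\big(\sigma^{(i)}(\tilde y_{t_k})(\tilde x^{(i)}_{t_{k+1}}-\tilde x^{(i)}_{t_k})+\cdots\big), $$
telescoping so that every resulting product carries exactly one ``difference'' factor — of signals, of controlled paths, or of L\'evy areas — and one factor bounded by the a priori estimates. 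On an interval of length $\tau$ this yields a bound of the shape
$$ \text{(distance of integrals)}\leq C_T\,\tau^{\theta}\big(\text{distance of controlled paths}\big)+C_T\big(\norm{x-\tilde x}_{\gamma,\infty}+\norm{\mathbf x-\tilde{\mathbf x}}_{2\gamma}\big) $$
for some $\theta>0$, where the gain $\tau^{\theta}$ comes from the surplus regularity guaranteed by the threshold $(2+\delta)\gamma>1$. Together with the Lipschitz stability of the composition $y\mapsto\sigma(y)$, which follows from $\sigma\in C_b^{2,\delta}(\R^d;\R^{d\times m})$ and Proposition \ref{cp:weak-phi}, this makes the solution map a contraction in the controlled-path metric on a short interval.

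Choosing $\tau$ so small that $C_T\tau^{\theta}<1/2$ then gives the asserted estimate on $[0,\tau]$, with right-hand side $|a-\tilde a|+\norm{x-\tilde x}_{\gamma,\infty}+\norm{\mathbf x-\tilde{\mathbf x}}_{2\gamma}$. To globalise I would partition $[0,T]$ into $N\approx T/\tau$ intervals of length $\tau$ and iterate the local estimate, taking the terminal controlled path on each interval as the initial datum on the next; the ``initial value'' discrepancy at each step is controlled by the accumulated distance from the preceding steps, so a discrete Gronwall argument produces a bound on all of $[0,T]$ whose constant grows at most exponentially in $N$, hence in the signal norms. Absorbing everything into an increasing function $C_T$ of $\norm{x}_{\gamma,\infty}+\norm{\tilde x}_{\gamma,\infty}+\norm{\mathbf x}_{\gamma}+\norm{\tilde{\mathbf x}}_{\gamma}$ gives the claim.

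The main obstacle I expect is precisely the bookkeeping in the stability estimate for the rough integral: one must organise the telescoping so that each product isolates a single difference factor, handle the cross terms involving $\mathbf x$ against $\tilde{\mathbf x}$ (which require both $\norm{\mathbf x-\tilde{\mathbf x}}_{2\gamma}$ and the a priori control of $\mathbf x$), and verify that the accumulated constant is genuinely \emph{increasing} in the norms of \emph{both} signals. The globalisation is conceptually routine but needs care to confirm that patching preserves the linear dependence on the data differences on the right-hand side rather than degrading it.
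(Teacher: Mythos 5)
The paper does not prove this statement: Theorem~\ref{stability} is quoted as a known result from the rough-paths literature (Gubinelli's controlled-path framework and the monographs of Friz--Victoir and Friz--Hairer), so the only meaningful comparison is with the standard proof given there. Your plan is essentially that proof: a priori bounds on each solution in terms of its driving signal, a local Lipschitz estimate for the rough integration map with respect to the controlled path, the path, and the L\'evy area (obtained from the sewing lemma applied to the telescoped difference of the compensated Riemann sums in \eqref{crucial-1}, with the gain $\tau^{\theta}$ coming from $(2+\delta)\gamma>1$), followed by a short-time contraction and a patching/discrete Gronwall argument that makes the final constant an increasing function of the signal norms. Two points deserve care if you were to write this out. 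First, $y$ and $\tilde y$ are controlled by \emph{different} reference paths, so ``the distance between the two controlled paths'' must be defined as a metric across base points (compare Gubinelli derivatives directly and remainders after substituting one path for the other); your phrase ``the solution map is a contraction'' is slightly loose, since there is no single fixed-point map here --- one compares the two fixed points via the local Lipschitz estimate and invariance of a ball. Second, the length $\tau$ of the subintervals must be chosen depending on the norms of \emph{both} signals, so the number of patching steps, and hence the exponential factor, depends on those norms; you note this, and it is exactly why $C_T$ is only claimed to be increasing. As a proof \emph{plan} your proposal is correct and matches the standard argument; the central stability estimate for the rough integral is described rather than carried out, which is acceptable for a result the paper itself treats as a black box.
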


Returning to our original rough SDE, i.e. to
\begin{align}\label{Req2}
 X^{R}_t=x_0+\int_0^t a^R(X^R_s)\dif s& +\sum\limits_{i=1}^m \int_0^t b^{R,(i)}(X^{R}_s)\difr W^{(i)}_s  \\ & +\sum\limits_{j=1}^{\ell}\int_0^t c^{R,(j)}(X^R_s)\difr B^{H,(j)}_s, \qquad t \in [0,T], \nonumber
\end{align}
 the previous results yield a unique solution, if $a^R,  b^{R,(i)}, c^{R,(j)} \in C_b^{2,\delta}(\R^d;\R^d)$, $i=1, \ldots, m$, $j=1, \ldots, \ell$,
for $\delta >0$ arbitrarily small, where the L\'evy area for $(t,W_t,B_t)_{t \in [0,T]}$ is constructed as in Remark \ref{rem_levy}.

\section{From the rough paths equation to the mixed equation}
\label{rptomixed}
Here we show that the solution of the rough SDE \eqref{Req2} is the solution of the mixed equation \eqref{mixed_eq_2} with
 \begin{align} a^M(x)=a^R(x)+\frac 1 2 \sum_{i=1}^d \mathcal D^{(i)}_{b^{R}} b^{R,(i)}(x), \, \,  b^M(x)=b^R(x), \,\, c^M(x)=c^R(x), \,\,x \in \R^d. \label{coeff_rel_1} \end{align}

\begin{theorem} Let  $\delta >0$ and $a^R,  b^{R,(i)}, c^{R,(j)} \in C_b^{2,\delta}(\R^d;\R^d)$, $i=1, \ldots, m$, $j=1, \ldots, \ell$. Then the solution $X^R$ of the rough equation  \eqref{Req2}
and the  solution $X^M$ of the mixed equation  \eqref{mixed_eq_2} with coefficients given by \eqref{coeff_rel_1} coincide $P$-almost surely, i.e. we have
$$ P\big(X_t^M=X_t^R, \, t \in [0,T]\big)=1.$$ \label{rptomixed_thm}
\end{theorem}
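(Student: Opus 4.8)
\emph{Strategy.} The plan is to show that the rough solution $X^R$ itself solves the mixed equation \eqref{mixed_eq_2} with the coefficients \eqref{coeff_rel_1}, and then to invoke the uniqueness part of Theorem \ref{mixed:main_result}. Before the main computation I would check that the corrected coefficients are admissible: since $a^R,b^{R,(i)},c^{R,(j)}\in C_b^{2,\delta}(\R^d;\R^d)$, each term $\mathcal D^{(i)}_{b^R}b^{R,(i)}=\sum_{l} b^R_{l,i}\,\partial_{x_l}b^{R,(i)}$ is a product of a bounded function with a bounded first derivative, hence is itself bounded and $C^1$ with bounded derivative; consequently the coefficients $a^M=a^R+\tfrac12\sum_i \mathcal D^{(i)}_{b^R}b^{R,(i)}$, $b^M=b^R$, $c^M=c^R$ fulfil the hypotheses (i)--(iii) of Theorem \ref{mixed:main_result}, so that $X^M$ exists and is unique. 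I would also record that $X^R$ is continuous and $(\mathcal F_t)$-adapted, being a continuous image (via Theorem \ref{stability}) of the rough path built from $W$ and $B^H$ up to the current time; this adaptedness is what lets left-point sums against $W$ produce an It\=o integral.

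\emph{The core identity.} Next I would apply the representation \eqref{crucial} to the joint driver $x=(t,W,B^H)$ and the joint coefficient $\sigma=(a^R,b^R,c^R)$, for which the standing assumption $(2+\delta)\gamma>1$ holds. Fixing a sequence of partitions with mesh tending to zero, the Gubinelli--Riemann sums converge almost surely to the rough integrals. The heart of the matter is to split the compensator $\mathbf x_{t_k,t_{k+1}}(\ell,i)$ according to the three blocks of the joint L\'evy area of Remark \ref{rem_levy}: the $W$--$W$ block, where $\mathbf W_{t_k,t_{k+1}}(\ell,i)$ is the \emph{Stratonovich} iterated integral and hence equals its It\=o counterpart plus $\tfrac12\delta_{\ell i}(t_{k+1}-t_k)$; the $B^H$--$B^H$ block, a Young iterated integral of order $|t_{k+1}-t_k|^{2H}$; and the mixed $W$--$B^H$ blocks, Young integrals of order $|t_{k+1}-t_k|^{\gamma+H}$.

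\emph{Passing to the limit.} Since $2H>1$ and, choosing the H\"older exponent of $W$ close to $1/2$, also $\gamma+H>1$, the partition sums of the pure-$B^H$ and of the mixed blocks are $O(|\mathcal P|^{2H-1})$ and $O(|\mathcal P|^{\gamma+H-1})$ and therefore vanish in the limit; thus the rough integral against $B^H$ collapses to the Young integral of \eqref{mixed_eq_2}, and the cross blocks contribute nothing to the drift. In the surviving $W$--$W$ block the left-point sum $\sum_k b^{R,(i)}(X^R_{t_k})(W^{(i)}_{t_{k+1}}-W^{(i)}_{t_k})$ together with the It\=o iterated-integral piece converges \emph{in probability} to $\int_0^t b^{R,(i)}(X^R_s)\,\difI W^{(i)}_s$: by adaptedness the iterated-integral piece is a discrete martingale whose squared $L^2$-norm is of order $\sum_k|t_{k+1}-t_k|^2\to0$ (the boundedness of $\mathcal D^{(\ell)}_{b^R}b^{R,(i)}$ entering here), while the left-point sum is the defining Riemann sum of the It\=o integral. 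The leftover compensator $\tfrac12\sum_k \mathcal D^{(i)}_{b^R}b^{R,(i)}(X^R_{t_k})(t_{k+1}-t_k)$ converges \emph{almost surely} to $\tfrac12\int_0^t \mathcal D^{(i)}_{b^R}b^{R,(i)}(X^R_s)\,\dif s$, being an ordinary Riemann sum of a continuous integrand.

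\emph{Main obstacle.} The delicate point is that these two modes of convergence differ: the rough integral is a pathwise (almost sure) limit along any partition sequence, whereas the It\=o integral exists only as a limit in probability. I would reconcile them by extracting a subsequence of partitions along which the left-point sums also converge almost surely to the It\=o integral; along this subsequence every term converges almost surely, so the ($\omega$-wise deterministic) rough limit is identified, $P$-almost surely, with the It\=o integral plus the correction drift. Summing over the Brownian directions $i$ and collecting the three blocks shows that $X^R$ satisfies \eqref{mixed_eq_2} with the coefficients \eqref{coeff_rel_1}; the uniqueness in Theorem \ref{mixed:main_result} then forces $P(X^M_t=X^R_t,\ t\in[0,T])=1$.
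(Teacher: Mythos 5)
Your proposal is correct and follows essentially the same route as the paper: both identify the rough integrals via the Gubinelli--Riemann sums \eqref{crucial} for the joint driver $(t,W,B^H)$, show that all L\'evy-area corrections vanish except the $W$--$W$ diagonal terms (which produce the drift correction $\tfrac12\sum_i\mathcal D^{(i)}_{b^R}b^{R,(i)}$), and pass to a subsequence to reconcile the almost-sure and in-probability limits before concluding via uniqueness of the mixed equation. The only cosmetic difference is that you treat the $W$--$W$ block by writing each Stratonovich iterated integral as It\=o plus $\tfrac12\delta_{\ell i}(t_{k+1}-t_k)$, whereas the paper splits into the off-diagonal case (Stratonovich equals It\=o by independence, killed by an $L^2$ martingale estimate) and the diagonal case (quadratic variation); these are equivalent computations.
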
 
\begin{proof}
For the $m+\ell+1$ dimensional rough path $g=(\operatorname{id},W,B^H)$ denote its L\'evy area by $\mathbf{G}=(\mathbf{G}_{s,t})_{0\leq s<t\leq T}$. Now fix $t \in [0,T]$. Using relation \eqref{crucial} for  $\sigma=(a^R,b^R,c^R)\colon \R^d\to \R^d\times \R^{d\times m}\times \R^{d\times \ell}$ we can write 
\begin{align*}
\int_0^t \sigma^{(i)}(X^R_s)\difr g^{(i)}_s=\lim_{|\mathcal P|\to 0}\sum\limits_{t_k\in  \mathcal P} \left( \sigma^{(i)} (X^R_{t_k}) g^{(i)}_{t_k,t_{k+1}}+\sum\limits_{j=1}^{1+m+\ell} \mathcal D^{(j)}_{\sigma} \sigma^{(i)}(X^R_{t_k})\mathbf G_{t_k,t_{k+1}}(j,i)\right )
\end{align*}
for all $i=1,\ldots, 1+m+\ell$. 

Since $g^{(1)}=\operatorname{id}$ and the integrand is continuous, we have
$$ \lim_{|\mathcal P|\to 0}\sum\limits_{t_k\in\mathcal P}\sigma^{(1)} (X^R_{t_k}) g^{(1)}_{t_k,t_{k+1}}  \stackrel{P-a.s.}{=} \int_0^t a^R(X^R_s)\dif s. $$
For $i=1, \ldots, m$, we have
$$ \lim_{|\mathcal P|\to 0}\sum\limits_{t_k\in\mathcal P}\sigma^{(i+1)} (X^R_{t_k}) g^{(i+1)}_{t_k,t_{k+1}}  \stackrel{L^2(\Omega)}{=} \int_0^t b^{R,(i)}(X^R_s) \difI W^{(i)}_s, $$
by definition of the It\=o integral since $b^{R,(i)}(X^R_s)$, $s \in [0,T]$, is bounded and adapted. Moreover, the sample paths of $X^{R}$ are $\gamma$-H\"older continuous for all $\gamma<1/2$ and $B^H$
are H\"older continuous of all orders $\lambda<H$, thus we have 
$$ \lim_{|\mathcal P|\to 0}\sum\limits_{t_k\in\mathcal P}\sigma^{(i+m+1)} (X^R_{t_k}) g^{(i+m+1)}_{t_k,t_{k+1}}  \stackrel{P-a.s.}{=} \int_0^t c^{R,(i)}(X^R_s)\dif B^{H,(i)}_s $$
for $i=1, \ldots, \ell$ by definition of the Young integral.

Now consider the summands involving the L\'evy area terms. If $(i,j)\not \in (2,\ldots,m+1)^2$, then
the Young inequality gives
$$ \sup_{\substack{t,s\in[0,T]\\t\neq s}}
\frac{ \left|\int_s^t (g^{(i)}_u-g^{(i)}_s)\dif g^{(j)}_u \right|}{|t-s|^{1+\varepsilon}} < \infty \qquad P-a.s.$$
and hence it follows
 $$\lim_{|\mathcal P|\to 0}\sum\limits_{t_k\in\mathcal P}\mathcal D^{(j)}_{\sigma} \sigma^{(i)}(X^R_{t_k})\mathbf G_{t_k,t_{k+1}}(j,i) \stackrel{P-a.s.}{=}0.$$
 Next suppose $(i,j)\in (2,\ldots,m+1)^2$ and $i\neq j$, then 
$\mathbf G_{t_k,t_{k+1}}(i,j)=\int_s^t (W^{(i)}_u-W^{(i)}_s)\difI W^{(j)}_u$, since the Stratonovich and the It\=o integral coincide due to the independence of $W^{(i)}$, $W^{(j)}$. Exploiting the independence
of $G_{t_k,t_{k+1}}$ from $\mathcal{F}_{t_k}$
we obtain
\begin{align*}
& \Ex{\Big|\sum\limits_{t_k\in\mathcal P} \mathcal D^{(j)}_{\sigma}\sigma^{(i)}(X^R_{t_{k}})\mathbf G_{t_k,t_{k+1}}(j,i)\Big|^2}\\ & \qquad=\sum\limits_{t_k\in \mathcal P} \Ex{|\mathcal D^{(j)}_{\sigma}\sigma^{(i)}(X^R_{t_k})|^2\Big (\int_{t_k}^{t_{k+1}} (W^{(j)}_u-W^{(j)}_{t_k})\dif W^{(i)}_u\Big)^2}\\ & \qquad  =
\sum\limits_{t_k\in \mathcal P} \Ex{|\mathcal D^{(j)}_{\sigma}\sigma^{(i)}(X^R_{t_k})|^2}\Ex{\Big(\int_{t_k}^{t_{k+1}} (W^{(j)}_u-W^{(j)}_{t_k})\dif W^{(i)}_u\Big)^2} \\ & \qquad  =
\frac 1 2 \sum\limits_{t_k\in \mathcal P} \Ex{|\mathcal D^{(j)}_{\sigma}\sigma^{(i)}(X^R_{t_k})|^2}|\mathcal P|^2.
\end{align*}
The last term clearly vanishes for $|\mathcal P|\to 0$. Finally, if $i=j$ we
have $$ \int_{t_k}^{t_{k+1}} (W^{(i)}_u-W^{(i)}_{t_k}) \circ\dif W^{(i)}_u = \frac{1}{2} (W^{(i)}_{t_{k+1}}-W^{(i)}_{t_k})^2$$
and we obtain
\begin{align*} \lim_{|\mathcal P|\to 0 } \sum\limits_{t_k\in \mathcal P} \mathcal D^{(i)}_{\sigma} \sigma^{(i)}(X^R_{t_k})(W^{(i)}_{t_{k+1}}-W^{(i)}_{t_k})^2 \stackrel{P-a.s.}{=} \int_0^t  \mathcal D^{(i)}_{\sigma} \sigma^{(i)}(X^R_{s}) \dif s .
\end{align*}
 The latter follows from 
 $$  \sup_{s \in [0,T]} \left| \sum_{k=0}^{n-1}{\bf 1}_{[0,s]}(t_k)(W^{(i)}_{t_{k+1}}-W^{(i)}_{t_k})^2-s\right|\stackrel{P-a.s.}{\longrightarrow} 0$$ as  $|\mathcal P|\to 0$ and a density argument.

 By passing to a subsequence, we deduce that 
 \begin{align*}
\sum_{i=1}^{m+\ell+1}\int_0^t \sigma^{(i)}(X^R_s)\difr g^{(i)}_s 
 & \stackrel{P-a.s.}{=} \int_0^t\Big ( a^R(X^R_s)+\frac 1 2 \sum_{i=1}^m \mathcal D^{(i)}_{b^R} b^{R,(i)}(X^R_s)\Big) \dif s\\ & \qquad +\sum_{i=1}^m \int_0^t b^{R,(i)}(X^{(R)}_s)\difI W^{(i)}_s+\sum_{i=1}^{\ell} \int_0^t c^{R,(i)}(X^R_s)\dif B^{H,(i)}_s
 \end{align*}
 for all $t \in [0,T]$.  Since both sides are continuous in $t$ for almost all $\omega \in \Omega$, the exceptional set can be chosen independently of $t \in [0,T]$.
 Hence the assertion follows. 
\end{proof}

\section{From the mixed equation to  the rough paths equation}
\label{mixedtorp}
Throughout this section we assume that $a^M,b^M,c^M\in C^2_b$. Under this assumption the solution $X^M=(X^M_t)_{t\in[0,T]}$ to \eqref{mixed_eq_2}  exists, is unique and satisfies
$ \mathsf{E} \| X\|_{\theta}^p < \infty$ for all $p \geq 1$ and $\theta <1/2$, see
\cite{Shev-Delay}. Now, we will show that $X^M$ is the solution to \eqref{Req2} with the  coefficients 
\begin{align} \label{coeff_rel_2}
a^R(x)=a^M(x)-\frac 1 2 \sum_{i=1}^m\mathcal D^{(i)}_{b^M}b^{M,(i)}(x),\,\, b^R(x)=b^M(x),\,\, c^R(x)=c^M(x), \,\, x \in \R^d.
\end{align}
In order to do to this, we have to show that 
$(X^M(\omega), \sigma(X^M(\omega))$ is a weakly controlled path based on $\{g(t)(\omega), \mathbf{G}_{t,s}(\omega)\}_{0\leq s<t\leq T}$ for almost all $\omega \in \Omega$, where
$\sigma=(a^M,b^M,c^M)$. However, this is a consequence of the following two Lemmata.

\begin{lemma}
Let $h\in C^2(\R^d; \R)$. Then for all $\gamma\in (0,1/2)$ there exist  almost surely finite random variables $K_{T,h,\gamma,g}$ such that
$$
\left |\int_s^t (h(X^M_u)-h(X^M_s)) \dif g(u) \right|\leq K_{T, h, \gamma,g} \cdot  |t-s|^{2\gamma}, \qquad s,t \in [0,T], $$
where $ g \in \{ \operatorname{id}, B^{H,(i)}\}$ with $i \in \{1, \ldots, \ell\}$.
\end{lemma}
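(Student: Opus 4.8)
The plan is to treat the two admissible drivers $g=\operatorname{id}$ and $g=B^{H,(i)}$ separately, in both cases reducing the bound to the almost sure finiteness of $\norm{X^M}_{\infty,[0,T]}$ and $\norm{X^M}_{\theta,[0,T]}$ for $\theta<1/2$, which is guaranteed by the moment bound $\mathsf{E}\norm{X^M}_\theta^p<\infty$ quoted at the start of this section. Since $h\in C^2(\R^d;\R)$ is only locally Lipschitz, I would first introduce the almost surely finite random variable $L_h:=\sup_{|x|\leq \norm{X^M}_{\infty,[0,T]}}|(\operatorname{D}h)(x)|$; this is finite because $\operatorname{D}h$ is continuous and the sample paths of $X^M$ are bounded on the compact interval $[0,T]$. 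It dominates all increments through $|h(X^M_u)-h(X^M_s)|\leq L_h\,\norm{X^M}_{\theta,[0,T]}\,|u-s|^\theta$, so that $h(X^M)$ inherits $\theta$-Hölder continuity with $\norm{h(X^M)}_{\theta,[0,T]}\leq L_h\,\norm{X^M}_{\theta,[0,T]}$. The crucial observation is that for every $\gamma<1/2$ the ensuing estimates carry an exponent of $|t-s|$ strictly larger than $2\gamma$, so the surplus power can be absorbed into the constant via $|t-s|\leq T$.

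For $g=\operatorname{id}$ the integral is an ordinary Lebesgue integral, which I would bound directly:
\[
\left|\int_s^t (h(X^M_u)-h(X^M_s))\,\dif u\right|\leq L_h\,\norm{X^M}_{\theta,[0,T]}\int_s^t |u-s|^\theta\,\dif u=\frac{L_h\,\norm{X^M}_{\theta,[0,T]}}{\theta+1}\,|t-s|^{\theta+1}.
\]
Because $\theta+1>1>2\gamma$, the choice $K_{T,h,\gamma,g}=\frac{L_h\,\norm{X^M}_{\theta,[0,T]}}{\theta+1}\,T^{\theta+1-2\gamma}$ yields the claim for this driver, and it is finite and uniform in $(s,t)$.

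For $g=B^{H,(i)}$ the integral is a Young integral, and I would invoke the Young inequality from Section~\ref{integratingfbm} with $f=h(X^M)$ and $g=B^{H,(i)}$. Choosing Hölder exponents $\nu<1/2$ and $\mu<H$ with $\nu+\mu>1$ (possible since $H>1/2$) makes the integral well defined, and the inequality gives
\[
\left|\int_s^t (h(X^M_u)-h(X^M_s))\,\dif B^{H,(i)}_u\right|\leq C_{\nu,\mu}\,\norm{h(X^M)}_{\nu,[0,T]}\,\norm{B^{H,(i)}}_{\mu,[0,T]}\,|t-s|^{\nu+\mu}.
\]
Here $\norm{h(X^M)}_{\nu,[0,T]}\leq L_h\,\norm{X^M}_{\nu,[0,T]}$ and $\norm{B^{H,(i)}}_{\mu,[0,T]}$ are both almost surely finite (the latter by the Hölder regularity of fBm), and $\nu+\mu>1>2\gamma$, so absorbing the extra power of $|t-s|$ into the constant again delivers the asserted estimate.

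The only genuine obstacle I anticipate is bookkeeping rather than analytic: one must ensure that the random constant $K_{T,h,\gamma,g}$ is chosen uniformly in $(s,t)\in[0,T]^2$ and that the local, rather than global, Lipschitz behaviour of $h$ is correctly controlled through the pathwise bound $\norm{X^M}_{\infty,[0,T]}<\infty$. Once $L_h$ is fixed for each $\omega$, both displayed estimates are uniform in $s,t$, and the proof is complete.
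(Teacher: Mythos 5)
Your proof is correct and follows essentially the same route as the paper: both rest on the a.s.\ $\theta$-H\"older regularity of $X^M$ for $\theta<1/2$ and the Young inequality, producing an exponent $|t-s|^{\nu+\mu}$ with $\nu+\mu>2\gamma$ whose surplus is absorbed into the random constant. Your additional care (the direct Lebesgue bound for $g=\operatorname{id}$ and the explicit local-Lipschitz control of $h$ via $L_h$ and $\norm{X^M}_{\infty,[0,T]}$) only makes explicit what the paper's one-line argument leaves implicit.
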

\begin{proof}
Since $X^M\in C^\gamma([0,T])$ for all $\gamma <1/2$, the assertion is a direct consequence of
the Young inequality, which gives
 $$ \left|\int_s^t (h(X^M_u)-h(X^M_s)) \dif g(u) \right| \leq C_{\gamma,\gamma'} |t-s|^{\gamma + \gamma'} \| f(X^M) \|_{\gamma,[0,T]} \|g \|_{\gamma',[0,T]}$$
for $1-\gamma < \gamma' <H$.
\end{proof}

\begin{lemma}
\label{estimInt}
 Suppose $(Z(t),\mathcal F_t)_{t\in [0,T]}$ is a stochastic process with $\theta$-H\"older trajectories for all $\theta\in (0,1/2)$, such that 
 $ \mathsf{E} \|Z \|_{\theta}^p <\infty $ for all $p\geq 1$. Then,  for all $\eta \in (0, \theta)$, there exists an almost surely finite random variable $K_{T,\eta}$ such that
$$\left|\int_s^t (Z(v)-Z(s))\difI W_v \right|\leq K_{T,\eta}|t-s|^{1/2+\eta}, \qquad s,t \in [0,T].$$
\end{lemma}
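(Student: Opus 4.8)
The plan is to control the two-parameter quantity
$$M_{s,t}:=\int_s^t \big(Z(v)-Z(s)\big)\difI W_v,\qquad 0\le s\le t\le T,$$
first in the sense of moments and then to upgrade this into the claimed pathwise bound by a Garsia--Rodemich--Rumsey (Kolmogorov) type chaining argument. Fix $\eta\in(0,\theta)$ and a large integer $p$ to be specified. Since $Z$ is continuous and $(\mathcal F_t)$-adapted, the Itô integral is well defined and the Burkholder--Davis--Gundy inequality gives
$$\mathsf{E}\big[\abs{M_{s,t}}^p\big]\le C_p\,\mathsf{E}\Big[\Big(\int_s^t \abs{Z(v)-Z(s)}^2\dif v\Big)^{p/2}\Big].$$
Using the pathwise estimate $\abs{Z(v)-Z(s)}\le \norm{Z}_{\theta}\,\abs{v-s}^{\theta}$ and integrating yields
$$\mathsf{E}\big[\abs{M_{s,t}}^p\big]\le C_p\,(2\theta+1)^{-p/2}\,\mathsf{E}\big[\norm{Z}_{\theta}^p\big]\,\abs{t-s}^{p(\theta+1/2)},$$
which is finite by the standing hypothesis $\mathsf{E}\norm{Z}_{\theta}^p<\infty$. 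Thus $M$ enjoys a moment bound of order $\theta+1/2$ in $\abs{t-s}$.

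A moment bound on a genuinely two-parameter field does not by itself give a uniform (in $s,t$) pathwise estimate, so the crucial additional structural input is the Chen-type identity
$$M_{s,t}-M_{s,u}-M_{u,t}=\big(Z(u)-Z(s)\big)\big(W_t-W_u\big),\qquad s\le u\le t,$$
which is immediate from $M_{s,t}=N_t-N_s-Z(s)(W_t-W_s)$ with $N_t=\int_0^t Z\,\difI W$. The defect on the right-hand side is controlled pathwise by $\norm{Z}_{\theta}\norm{W}_{\gamma'}\,\abs{u-s}^{\theta}\abs{t-u}^{\gamma'}$, hence by an almost surely finite constant times $\abs{t-s}^{\theta+\gamma'}$ for any $\gamma'<1/2$. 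Since $\eta<\theta$, one may choose $\gamma'\in(1/2+\eta-\theta,\,1/2)$, so that $\theta+\gamma'>1/2+\eta$: the non-additivity of $M$ is of strictly higher order than the target exponent.

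With these two ingredients I would run the standard dyadic chaining. At dyadic level $n$ there are of order $2^n$ intervals, and a union bound together with Markov's inequality and the moment estimate shows that $\sum_n P\big(\max_k \abs{M_{t^n_k,t^n_{k+1}}}>\abs{t^n_{k+1}-t^n_k}^{1/2+\eta}\big)<\infty$ as soon as $p(\theta-\eta)>1$; by Borel--Cantelli this bounds all dyadic increments by $\abs{t^n_{k+1}-t^n_k}^{1/2+\eta}$ up to an almost surely finite constant. For a general pair $(s,t)$ one telescopes along the dyadic approximation of $[s,t]$ and accounts for the non-additivity through the Chen identity; the geometric sum of the dyadic increment bounds is comparable to $\abs{t-s}^{1/2+\eta}$, while the summed defect terms are comparable to $\abs{t-s}^{\theta+\gamma'}$ and hence are $\le T^{\theta+\gamma'-(1/2+\eta)}\abs{t-s}^{1/2+\eta}$ precisely because $\theta+\gamma'>1/2+\eta$. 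Collecting the two contributions produces the almost surely finite random variable $K_{T,\eta}$.

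I expect the two-parameter upgrade to be the main obstacle: one must resist reading off Hölder regularity from the moment bound alone and instead exploit the Chen relation to replace the uncountable supremum over pairs $(s,t)$ by a Borel--Cantelli-summable family of dyadic estimates, with the defect bound guaranteeing that the telescoping errors remain of the correct order.
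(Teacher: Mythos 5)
Your argument is correct, and it reaches the same intermediate estimate as the paper --- the Burkholder--Davis--Gundy bound $\mathsf{E}\abs{M_{s,t}}^{p}\leq C_{p}\,\mathsf{E}\norm{Z}_{\theta}^{p}\,\abs{t-s}^{p(\theta+1/2)}$ --- but the mechanism you use to upgrade it to a pathwise statement is genuinely different. The paper applies the Garsia--Rodemich--Rumsey inequality directly to the family $\int_{x}^{y}(Z(v)-Z(s))\difI W_{v}$ and chooses $p$ with $2(\theta-\eta)p>1$ so that the resulting double integral has finite expectation; this is shorter, but it treats a two-parameter field as if it were the increment of a single path, and the displayed GRR estimate there still carries the base point $s$ on both sides, so the non-additivity in $(s,t)$ is left implicit. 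You instead isolate exactly this issue: the Chen-type defect $M_{s,t}-M_{s,u}-M_{u,t}=(Z(u)-Z(s))(W_{t}-W_{u})$, which is pathwise of order $\abs{t-s}^{\theta+\gamma'}$ with $\theta+\gamma'>1/2+\eta$ by your choice of $\gamma'$, and then run the dyadic Kolmogorov-type chaining with Borel--Cantelli (your summability condition $p(\theta-\eta)>1$ matches the paper's choice of $p$). What your route buys is a proof that is honest about the two-parameter structure and essentially reproduces the Kolmogorov criterion for controlled/rough integrals; what the paper's route buys is brevity, at the cost of glossing over the point you correctly identify as the main obstacle. One small remark: in the final chaining step you should note that $(s,t)\mapsto M_{s,t}$ is a.s.\ continuous (it is, since $N$, $Z$ and $W$ are continuous), so that bounding dyadic pairs suffices; with that said, the proposal is complete.
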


\begin{proof}
 Let $\theta \in (0,1/2)$. Applying the Rodemich-Garcia-Rumsey inequality, we obtain that 
 $$ \sup_{\substack{t,s\in[0,T]\\t\neq s}}  \frac{ \left|\int_s^t (Z(v)-Z(s))\difI W_v \right|}{|t-s|^{1/2+\eta}} \leq C_{\theta,\eta ,p} \left(\int_s^t\int_s^t \frac{|\int_x^y (Z(v)-Z(s)) \difI W_v|^{2p}}{|x-y|^{ (1 + 2\eta) p +2}} \dif x\dif y\right)^{1/2p}.$$
 Now put  $$ K_{T ,\eta }=\left(\int_s^t\int_s^t \frac{|\int_x^y (Z(v)-Z(s)) \difI W_v|^{2p}}{|x-y|^{ (1 + 2\eta) p +2}} \dif x\dif y\right)^{1/2p}.$$
 The Burkholder-Davis-Gundy inequality gives
 $$   \mathsf{E} \left|\int_x^y (Z(v)-Z(s)) \difI W_v \right|^{2p} \leq C_{\theta,p} |y-x|^{(1+2\theta)p}.     $$
Choosing $p$ such that $2(\theta- \eta)p   > 1$ we obtain
 \begin{align*}
  \mathsf{E} K_{T,\eta}^{2p} &= \int_s^t\int_s^t  \frac{ \mathsf{E} |\int_x^y (Z(v)-Z(s)) \difI W_v|^{2p}}{|x-y|^{ (1 + 2\eta) p +2}} \dif x\dif y \leq  C_{\theta,p}
  \int_s^t\int_s^t |x-y|^{ -2(\theta- \eta)p -2} \dif x \dif y  < \infty .
  \end{align*}
\end{proof}

Now we can exploit the representation \eqref{crucial-1} and Proposition  \ref{cp:weak-phi} to work backwards through the proof of  Theorem  \ref{rptomixed_thm}, which gives:  

\begin{theorem}\label{transfer_mixed_rough} Let $a^M,b^{M,(i)},c^{M,(j)}\in C^{2}_b([0,T]; \R^d)$, $i=1, \ldots, m$, $j=1, \ldots, \ell$, and
suppose that $X^M=(X^M_t)_{t\in[0,T]}$ is the solution to \eqref{mixed_eq_2}, then $X^M$ is a solution to the rough equation \eqref{Req2}  with  coefficients
given by  \eqref{coeff_rel_2}.
\end{theorem}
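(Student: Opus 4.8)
The plan is to reverse the identification carried out in the proof of Theorem~\ref{rptomixed_thm}: once we know that $X^M$ is a weakly controlled path based on the driver $g=(\operatorname{id},W,B^H)$, the rough integral $\sum_{i=1}^{m+\ell+1}\int_0^t\sigma^{R,(i)}(X^M_s)\,\difr g^{(i)}_s$ with $\sigma^R=(a^R,b^R,c^R)$ from \eqref{coeff_rel_2} is well defined, and it remains to show that it equals $X^M_t-x_0$. Write $\mathbf{G}$ for the L\'evy area of $g$ and $\sigma=(a^M,b^M,c^M)$ for the mixed coefficients.

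First I would verify the controlled-path structure. Rewriting \eqref{mixed_eq_2} over an interval $[s,t]$ yields the decomposition $X^M_t-X^M_s=\sigma(X^M_s)(g_t-g_s)+r_{s,t}$, where the remainder $r_{s,t}$ is the sum over $i$ of the integrals $\int_s^t\big(\sigma^{(i)}(X^M_u)-\sigma^{(i)}(X^M_s)\big)\,\dif g^{(i)}_u$. The drift and fractional components of $r_{s,t}$ are controlled by the first lemma of this section, giving a bound of order $|t-s|^{2\gamma}$ for every $\gamma<1/2$, while the It\=o component is controlled by Lemma~\ref{estimInt} applied to $Z=b^{M,(i)}(X^M)$, which inherits the $\theta$-H\"older regularity and the moment bound $\mathsf{E}\|X^M\|_\theta^p<\infty$ recorded at the start of this section, giving a bound of order $|t-s|^{1/2+\eta}$. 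Fixing $\nu\in(1/3,1/2)$ and then choosing $\gamma\in(\nu,1/2)$ and $\eta\in(2\nu-\tfrac12,\tfrac12)$, which is possible because $\nu<1/2$, one obtains $2\nu\le\min\{2\gamma,\tfrac12+\eta\}$, so that $z=\sigma(X^M)\in\CC^\nu$ and $r\in\CC^{2\nu}_2$. Hence $(X^M,\sigma(X^M))$ is a weakly controlled path, and by Proposition~\ref{cp:weak-phi} so is $\big(\sigma^R(X^M),(\operatorname{D}\sigma^R)(X^M)\,\sigma(X^M)\big)$; the rough integral is then defined through \eqref{crucial-1}.

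Next I would evaluate this integral by repeating the limit computations of the proof of Theorem~\ref{rptomixed_thm} with $X^M$ in place of $X^R$ and $\sigma^R$ in place of $\sigma$. The leading Riemann sums converge to $\int_0^t a^R(X^M_s)\,\dif s$, to $\sum_i\int_0^t b^{R,(i)}(X^M_s)\,\difI W^{(i)}_s$ (in $L^2(\Omega)$, by adaptedness and boundedness), and to $\sum_i\int_0^t c^{R,(i)}(X^M_s)\,\dif B^{H,(i)}_s$. For the correction terms, the Young inequality kills every summand outside the Brownian block, the independence/mean-zero argument kills the off-diagonal Brownian summands in $L^2$, and the quadratic-variation limit turns each diagonal Brownian summand into the contribution $\tfrac12\,\mathcal{D}^{(i)}_\sigma\sigma^{R,(i)}(X^M_s)\,\dif s$. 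Here the key bookkeeping point is that the Gubinelli derivative of $X^M$ is the full $\sigma(X^M)$, so the differentiation direction in $\mathcal{D}^{(i)}_\sigma$ is the $i$-th column $b^{M,(i)}$ of $\sigma$; since $\sigma^{R,(i)}=b^{R,(i)}=b^{M,(i)}$ on the Brownian block, the total correction equals $\tfrac12\sum_{i=1}^m\mathcal{D}^{(i)}_{b^M}b^{M,(i)}(X^M_s)\,\dif s$.

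Collecting the limits along a common subsequence gives $\sum_i\int_0^t\sigma^{R,(i)}(X^M_s)\,\difr g^{(i)}_s=\int_0^t\big(a^R+\tfrac12\sum_i\mathcal{D}^{(i)}_{b^M}b^{M,(i)}\big)(X^M_s)\,\dif s+\sum_i\int_0^t b^{M,(i)}(X^M_s)\,\difI W^{(i)}_s+\sum_i\int_0^t c^{M,(i)}(X^M_s)\,\dif B^{H,(i)}_s$, and the definition \eqref{coeff_rel_2} of $a^R$ makes the bracket collapse to $a^M$, so the right-hand side is exactly $X^M_t-x_0$ by \eqref{mixed_eq_2}; continuity in $t$ lets one choose the exceptional set independently of $t$. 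I expect the genuinely delicate step to be the first one: arranging a single $\nu\in(1/3,\gamma]$ that simultaneously absorbs the $2\gamma$-decay of the Young and drift remainders and the weaker $\tfrac12+\eta$-decay of the It\=o remainder---which is exactly what forces the Burkholder--Davis--Gundy and Garsia--Rodemich--Rumsey estimates behind Lemma~\ref{estimInt}---together with the verification that the correction term is built from $b^M$ rather than from some other column of $\sigma$.
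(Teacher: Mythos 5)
Your proposal follows essentially the same route as the paper: establish that $(X^M,\sigma(X^M))$ is a weakly controlled path via the two Lemmata of Section~\ref{mixedtorp} (Young estimates for the drift and fBm remainders, the Garsia--Rodemich--Rumsey/Burkholder--Davis--Gundy bound of Lemma~\ref{estimInt} for the It\=o remainder), and then rerun the Riemann-sum limits from the proof of Theorem~\ref{rptomixed_thm} with $\sigma^R$ evaluated along $X^M$, the diagonal Brownian blocks producing the $\tfrac12\sum_i\mathcal{D}^{(i)}_{b^M}b^{M,(i)}$ correction that cancels against \eqref{coeff_rel_2}. Your write-up is in fact more explicit than the paper's (which compresses the second step to ``work backwards through the proof of Theorem~\ref{rptomixed_thm}''), notably in the exponent bookkeeping $2\nu\le\min\{2\gamma,\tfrac12+\eta\}$ needed to get the remainder into $\CC^{2\nu}_2$, and I see no gap.
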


Note that the  smoothness of the  drift coefficient of the arising rough path SDE is $C^1_b$. Within  the algebraic integration framework it is not known (up to the best of our knowledge) whether such an equation has a unique solution.

\section{Application to numerical methods} \label{rptomixed_appl}

\subsection{Limit theorem for mixed equations}
At the core of the theory of mixed equations is a limit theorem in \cite{mbfbm-limit}, which we will briefly recall here. Let $n \in \mathbb{N}$ and define $$ B^{H,n}_t=n\int_{(t- 1/n)\vee 0}^t B^H_s \dif s, \qquad t \geq 0. $$ Note that $B^{H,n}$ is an $(\mathcal{F}_t)_{t \geq 0}$-adapted Gaussian process such that
its
trajectories  are a.s.~differentiable and  $$ \dot{B}^{H,n}_t=n(B^H_t-B^H_{(t-1/n)\vee 0}), \qquad t \geq 0.$$ 
Suppose that $X^{M,n}=(X^{M,n}_t)_{t\in [0,T]}$ is a solution to
\begin{align} \label{mixed_approx}
 X^{M,n}_t=x_0+\int_0^t \big( a(X^{M,n}_s)+c(X^{M,n}_s)  \dot{B}^{H,n}_s \big) \dif s+\int_0^t b(X^M_s) \difI W_s, \qquad t \in [0,T].
\end{align}
Then we have
\begin{align}
X^{M,n}\to X^M,\,n\to \infty\,\,\, \text{uniformly in probability}.
\label{result}
\end{align}
 Setting $g^n=(\operatorname{id},W,B^{H,n})$, we can apply the stability of rough paths equations and the relation between mixed and rough SDEs to recover and strengthen this result.

First note that the integral $\int_0^t c(X^{R,n}) \difr B^{H,n}_s$ coincides with the ordinary Young integral $\int_0^t c(X^{R,n}_s)\dot{B}_s^{H,n} \dif s$.
Proceeding analogously to the proof of Theorem \ref{rptomixed_thm} we have:

\begin{proposition} Let  $\delta >0$ and $a,  b^{(i)}, c^{(j)} \in C_b^{2,\delta}(\R^d;\R^d)$, $i=1, \ldots, m$, $j=1, \ldots, \ell$. Then the solution $X^{R,n}$ of the rough equation  
 \begin{align}  \label{eqSmoothed}
 X^{R,n}_t=x_0 & +\int_0^t  \widetilde{a}(X^{R,n})  \dif s   +\int_0^t b(X^{R,n}) \difr W_s  +\int_0^t c(X^{R,n}) \difr B^{H,n}_s, \quad t \in [0,T],
\end{align} with
$$\widetilde{a}(x)=a(x) - \frac 1 2 \sum_{i=1}^m \mathcal D^{(i)}_b b^{(i)}(x), \qquad x \in \R^d, $$
and the solution of  equation \eqref{mixed_approx} coincide $P$-almost surely.
 \end{proposition}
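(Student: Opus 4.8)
The plan is to prove the identity by showing that the rough solution $X^{R,n}$ itself satisfies the mixed equation \eqref{mixed_approx}, and then concluding by uniqueness of the mixed solution. The whole argument runs exactly parallel to the proof of Theorem \ref{rptomixed_thm}, the only structural change being that the driving signal is now $g^n=(\operatorname{id},W,B^{H,n})$, whose $B^{H,n}$-component is, for each fixed $n$, almost surely Lipschitz rather than merely $H$-Hölder. In particular the only genuinely rough component of $g^n$ is the Brownian one, so the case analysis below is strictly simpler than in Theorem \ref{rptomixed_thm}, and well-posedness of $X^{R,n}$ is unproblematic (the equation reduces to a Stratonovich SDE in $W$ with smooth extra drift coming from $B^{H,n}$).

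First I would set $\sigma=(\widetilde a,b,c)$, write $\mathbf{G}^n$ for the Lévy area of $g^n$, and apply the expansion \eqref{crucial} to each rough integral $\int_0^t\sigma^{(i)}(X^{R,n}_s)\difr g^{n,(i)}_s$. This splits the compensated Riemann sums into a first-order part $\sum_{t_k}\sigma^{(i)}(X^{R,n}_{t_k})g^{n,(i)}_{t_k,t_{k+1}}$ and a Lévy-area part $\sum_{t_k}\mathcal D^{(j)}_{\sigma}\sigma^{(i)}(X^{R,n}_{t_k})\mathbf{G}^n_{t_k,t_{k+1}}(j,i)$. The first-order parts are routine: the time component converges to $\int_0^t\widetilde a(X^{R,n}_s)\dif s$; each $W$-component converges in $L^2(\Omega)$ to the Itô integral $\int_0^t b^{(i)}(X^{R,n}_s)\difI W^{(i)}_s$ (boundedness and adaptedness of the integrand); and each $B^{H,n}$-component converges to $\int_0^t c^{(i)}(X^{R,n}_s)\dot B^{H,n}_s\dif s$, using the remark stated just before the proposition that the rough integral against the smooth path $B^{H,n}$ coincides with the ordinary Young integral.

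The Lévy-area bookkeeping is where the correction enters, and it reproduces the three cases of Theorem \ref{rptomixed_thm}. Whenever the index pair $(i,j)$ is not entirely contained in the $W$-block, at least one factor is the time or the $B^{H,n}$ component, both Lipschitz for fixed $n$; the Young inequality then bounds $\mathbf{G}^n_{s,t}(j,i)$ by a constant times $|t-s|^{1+\varepsilon}$ for some $\varepsilon>0$, so the associated sums vanish as $|\mathcal P|\to0$. For both indices in the $W$-block with $i\neq j$ the cross terms vanish in $L^2(\Omega)$ by the same independence computation as in Theorem \ref{rptomixed_thm}, while for the diagonal $i=j$ the identity $\int_{t_k}^{t_{k+1}}(W^{(i)}_u-W^{(i)}_{t_k})\circ\dif W^{(i)}_u=\tfrac12(W^{(i)}_{t_{k+1}}-W^{(i)}_{t_k})^2$ together with the uniform convergence of the quadratic variation yields the correction $\tfrac12\sum_{i=1}^m\int_0^t\mathcal D^{(i)}_b b^{(i)}(X^{R,n}_s)\dif s$.

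Collecting all contributions (passing to a subsequence to reconcile the $L^2$ and almost sure limits), the effective drift of $X^{R,n}$ becomes $\widetilde a+\tfrac12\sum_{i=1}^m\mathcal D^{(i)}_b b^{(i)}$, which by the definition of $\widetilde a$ equals $a$; hence $X^{R,n}$ satisfies exactly \eqref{mixed_approx}, and by uniqueness of the mixed solution (Theorem \ref{mixed:main_result}, applicable since $a,b,c\in C^2_b$) the two processes agree, with the null set chosen independently of $t$ by continuity. I expect the only genuine work to be the Lévy-area case analysis; the single point worth flagging is that the Young estimates for the $B^{H,n}$-terms hold only with almost surely finite, $n$-dependent constants (built from $\|\dot B^{H,n}\|_{\infty}$), which is harmless since $n$ is fixed throughout.
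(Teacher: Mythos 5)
Your proposal is correct and follows essentially the same route as the paper, which simply asserts that one proceeds analogously to the proof of Theorem \ref{rptomixed_thm} with the driving signal $g^n=(\operatorname{id},W,B^{H,n})$; your write-up merely makes explicit the case analysis (and the simplification coming from $B^{H,n}$ being Lipschitz for fixed $n$) that the paper leaves implicit. The concluding appeal to uniqueness of the solution of \eqref{mixed_approx} is also the intended step, since for fixed $n$ that equation is an It\=o SDE with bounded adapted random drift.
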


Our aim is now to prove:

\begin{proposition}\label{aux_prop} Let  $\tilde{a},b^{(i)},c^{(j)}\in C^{2,\delta}_b(\R^d; \R^d)$, $i=1, \ldots, m$, $j=1, \ldots, \ell$, and $\gamma \in (1/3, 1/2)$. Then we have
$$ \| X^{R,n}- X^R \|_{\gamma,    \infty, [0,T]} \stackrel{P-a.s.} \longrightarrow 0 \qquad \textrm{as} \quad n \rightarrow \infty. $$
\label{conv}
\end{proposition}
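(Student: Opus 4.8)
The plan is to deduce the claimed convergence directly from the stability estimate for rough differential equations, namely Theorem~\ref{convTheorem}. Both $X^{R,n}$ and $X^R$ solve rough equations with the \emph{same} coefficient vector $\sigma=(\widetilde a,b,c)$ (which is $C_b^{2,\delta}$ with $(2+\delta)\gamma>1$ for $\gamma$ close enough to $1/2$ and $\delta$ small) and the same initial value $x_0$; the only difference lies in the driving signal, which is $g^n=(\operatorname{id},W,B^{H,n})$ in the first case and $g=(\operatorname{id},W,B^H)$ in the second. Applying Theorem~\ref{convTheorem} with $a=\tilde a=x_0$, $x=g$ and $\tilde x=g^n$, the initial-value term drops out and we obtain
\begin{align*}
\norm{X^{R,n}-X^R}_{\gamma,\infty,[0,T]}
&\leq C_T\big(\norm{g}_{\gamma,\infty}+\norm{g^n}_{\gamma,\infty}+\norm{\mathbf G}_{\gamma}+\norm{\mathbf G^n}_{\gamma}\big)\\
&\qquad\times\big(\norm{g-g^n}_{\gamma,\infty,[0,T]}+\norm{\mathbf G-\mathbf G^n}_{2\gamma,[0,T]}\big),
\end{align*}
where $\mathbf G$, $\mathbf G^n$ denote the L\'evy areas of $g$, $g^n$. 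Thus everything reduces to controlling the two factors on the right: showing that the first factor stays almost surely bounded in $n$, and that the second factor tends to $0$ almost surely.

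First I would handle the path-level convergence. The signals $g$ and $g^n$ differ only in their last $\ell$ components, $B^H$ versus $B^{H,n}$, so $\norm{g-g^n}_{\gamma,\infty}$ reduces to $\norm{B^H-B^{H,n}}_{\gamma,\infty}$. Since $B^{H,n}_t=n\int_{(t-1/n)\vee0}^t B^H_s\,\dif s$ is an average of $B^H$ over a shrinking window and $B^H$ has $\lambda$-H\"older paths for every $\lambda<H$, a standard estimate gives $\norm{B^H-B^{H,n}}_{\infty}=O(n^{-\lambda})$ and, interpolating against the uniform $\gamma'$-H\"older bound on both paths (valid for $1-H<\gamma<\gamma'<H$), also $\norm{B^H-B^{H,n}}_{\gamma}\to0$ almost surely; simultaneously $\norm{g^n}_{\gamma,\infty}$ stays almost surely bounded. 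This is elementary but needs the interpolation step to upgrade sup-norm closeness to $\gamma$-H\"older closeness.

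The main obstacle is the \emph{L\'evy-area} term $\norm{\mathbf G-\mathbf G^n}_{2\gamma}$ and the boundedness of $\norm{\mathbf G^n}_{\gamma}$. Because the first coordinate of $g$ is $\operatorname{id}$ and the Brownian coordinates $W$ are untouched by the approximation, all iterated integrals that do not involve the fBm coordinates are identical for $g$ and $g^n$, so those differences vanish exactly; only the cross-integrals between $W$ (or $\operatorname{id}$) and $B^{H,n}$, and the pure $B^{H,n}$ iterated integrals, need attention. For the mixed $\operatorname{id}$–fBm and fBm–fBm iterated integrals, both are genuine Young integrals (the integrands and integrators are jointly $(\gamma'+\lambda)$-regular with $\gamma'+\lambda>1$), and here I would invoke the continuity of the Young integral map: the estimate
$$\Big|\int_s^t\!\!\int_s^u \dif(g^{(i)}_v-g^{n,(i)}_v)\,\dif g^{(j)}_u\Big|\;+\;\Big|\int_s^t\!\!\int_s^u \dif g^{n,(i)}_v\,\dif(g^{(j)}_u-g^{n,(j)}_u)\Big|$$
is bounded through the Young inequality by the $\gamma$-H\"older norms already shown to converge, which forces $\norm{\mathbf G-\mathbf G^n}_{2\gamma}\to0$ almost surely and keeps $\norm{\mathbf G^n}_{\gamma}$ almost surely bounded. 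The iterated integrals involving a Brownian coordinate paired with an fBm coordinate are also Young integrals (by independence and $H>1/2$ the Brownian path still has H\"older exponent $>1/2$ against the fBm), so the same continuity argument applies; no It\=o–Stratonovich subtlety enters since the two Brownian coordinates are never integrated against each other in the altered components. Feeding these two limits back into the stability estimate yields $\norm{X^{R,n}-X^R}_{\gamma,\infty,[0,T]}\to0$ almost surely, which is the assertion.
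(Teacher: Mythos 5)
Your proposal follows essentially the same route as the paper: both reduce the claim to the stability estimate of Theorem~\ref{convTheorem} (same coefficients and initial value, different drivers $g$ and $g^n$) and then establish the almost sure convergence $\|B^{H,n}-B^H\|_{\gamma}\to 0$ together with the convergence of the L\'evy areas $\|\mathbf G^n-\mathbf G\|_{2\gamma}\to 0$, the latter by applying the Young inequality componentwise to the iterated integrals that involve the fBm coordinates (the paper treats $\int_s^t(B^{H}_u-B^{H,n}_u)\,\difI W_u$ via Young integration by parts, which is equivalent to your direct estimate). The one imprecise point is your parenthetical justification for the $W$--$B^{H,n}$ cross integrals: the Brownian path is only $\lambda$-H\"older for $\lambda<1/2$ and independence plays no role here; these are Young integrals simply because one can choose $\lambda<1/2<\mu<H$ with $\lambda+\mu>1$.
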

This result directly implies:
\begin{corollary}\label{app_mixed_cor} Let  $a,b^{(i)},c^{(j)}\in C^{2,\delta}_b(\R^d; \R^d)$, $i=1, \ldots, m$, $j=1, \ldots, \ell$, $\gamma \in (1/3, 1/2)$ and
$  \sum_{i=1}^m \mathcal D^{(i)}_b b^{(i)}  \in C^{2,\delta}_b([0,T]; \R^d)$. Then  we have
$$ \| X^{M,n}- X^M \|_{\gamma, \infty, [0,T]} \stackrel{P-a.s.} \longrightarrow 0 \qquad \textrm{as} \quad n \rightarrow \infty. $$
\label{conv}
\end{corollary}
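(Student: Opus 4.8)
The plan is to assemble the conclusion from the three ingredients already prepared in this subsection: the almost-sure identification of the smoothed rough and mixed approximations, the correction formula of Theorem \ref{rptomixed_thm}, and the pathwise convergence of Proposition \ref{aux_prop}. No new analysis is needed; the entire content is to verify that the coefficient hypotheses line up so that each cited result applies, and then to chain the identifications together.

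First I would record the regularity bookkeeping. Set $\tilde a(x)=a(x)-\frac12\sum_{i=1}^m \mathcal D^{(i)}_b b^{(i)}(x)$. Since $a\in C^{2,\delta}_b$ and, by the standing extra hypothesis, $\sum_{i=1}^m \mathcal D^{(i)}_b b^{(i)}\in C^{2,\delta}_b$, the corrected drift satisfies $\tilde a\in C^{2,\delta}_b$. Together with $b^{(i)},c^{(j)}\in C^{2,\delta}_b$ this is precisely the regularity required to invoke Proposition \ref{aux_prop} for the coefficient triple $(\tilde a,b,c)$, as well as the Proposition preceding it.

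Next I would pin down the two pairs of solutions $P$-almost surely. The Proposition immediately before Proposition \ref{aux_prop} yields $X^{R,n}=X^{M,n}$ $P$-a.s., where $X^{R,n}$ solves the smoothed rough equation \eqref{eqSmoothed} with drift $\tilde a$. On the limiting side, Theorem \ref{rptomixed_thm} applied with rough coefficients $(a^R,b^R,c^R)=(\tilde a,b,c)$ identifies the rough solution $X^R$ of \eqref{Req2} with the mixed solution $X^M$ of \eqref{mixed_eq_2} $P$-a.s., since the induced mixed drift is $\tilde a+\frac12\sum_{i=1}^m \mathcal D^{(i)}_b b^{(i)}=a$, matching the drift of \eqref{mixed_eq_2} with coefficients $(a,b,c)$.

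Finally I would invoke Proposition \ref{aux_prop} for $(\tilde a,b,c)$ to obtain $\|X^{R,n}-X^R\|_{\gamma,\infty,[0,T]}\to 0$ $P$-a.s., and substitute the two identifications to conclude $\|X^{M,n}-X^M\|_{\gamma,\infty,[0,T]}=\|X^{R,n}-X^R\|_{\gamma,\infty,[0,T]}\to 0$ $P$-a.s. Only countably many null sets are discarded along the way (the identification set for each $n$, the single identification $X^R=X^M$, and the convergence set of Proposition \ref{aux_prop}), so their union is still null and the statement holds on one full-measure event. The proof is therefore essentially a matter of stitching; the one point that genuinely requires the extra hypothesis $\sum_{i=1}^m \mathcal D^{(i)}_b b^{(i)}\in C^{2,\delta}_b$ is that the Wong--Zakai correction term involves first derivatives of $b$, so demanding $\tilde a\in C^{2,\delta}_b$ is a real constraint not implied by $b\in C^{2,\delta}_b$ alone.
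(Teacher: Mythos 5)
Your proposal is correct and is essentially the argument the paper intends: the corollary is obtained by combining the a.s.\ identification $X^{R,n}=X^{M,n}$ from the preceding Proposition, the identification $X^{R}=X^{M}$ from Theorem \ref{rptomixed_thm} applied with rough drift $\tilde a$ (whose $C^{2,\delta}_b$ regularity is exactly what the extra hypothesis on $\sum_{i=1}^m \mathcal D^{(i)}_b b^{(i)}$ guarantees), and the convergence of Proposition \ref{aux_prop}. Your bookkeeping of the coefficient correction and of the null sets matches the paper's (unwritten but clearly intended) chain of reasoning.
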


Recalling that  $g^n=(\operatorname{id},W,B^{H,n})$ Proposition \ref{aux_prop}  follows from Theorem \ref{stability} and the following two Lemmata. (Note that the following estimates are not covered by Section 15.5 in \cite{Friz}, since
$B^{H,n}$ is not a mollifier approximation.)

\begin{lemma}
For all $0 <\gamma< \gamma' < H$  there exists an almost surely finite random variable $K_{T,\gamma, \gamma'}$ such that
$$
\| B^{H,n}- B^H \|_{\gamma, [0,T]} \leq K_{T,\gamma, \gamma'} \cdot n^{-(\gamma'-\gamma)}.
$$
\end{lemma}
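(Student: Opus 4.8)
The plan is to reduce the lemma to two a priori estimates for the difference $D^n := B^{H,n}-B^H$ and then to interpolate between them. The key preliminary simplification is to extend $B^H$ to the negative half-line by setting $\tilde B^H_s=B^H_s$ for $s\geq 0$ and $\tilde B^H_s=0$ for $s<0$. Since $B^H_0=0$, this extension is still $\gamma'$-H\"older continuous on the relevant range $[-1/n,T]$, with seminorm bounded by $\norm{B^H}_{\gamma',[0,T]}$ (the only non-trivial case $s<0\leq s'$ uses $|B^H_{s'}-B^H_0|\leq\norm{B^H}_{\gamma',[0,T]}|s'|^{\gamma'}\leq\norm{B^H}_{\gamma',[0,T]}|s-s'|^{\gamma'}$). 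With this convention the smoothing becomes a genuine one-sided convolution,
$$ B^{H,n}_t = n\int_{(t-1/n)\vee 0}^t B^H_s\,\dif s = \int_0^{1/n} n\,\tilde B^H_{t-u}\,\dif u, \qquad t\in[0,T], $$
so that the boundary region $t<1/n$, where the averaging window is truncated, requires no separate treatment.

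First I would bound the sup-norm. Writing $B^{H,n}_t-B^H_t=\int_0^{1/n} n(\tilde B^H_{t-u}-\tilde B^H_t)\,\dif u$ and using $|\tilde B^H_{t-u}-\tilde B^H_t|\leq \norm{B^H}_{\gamma',[0,T]}\,u^{\gamma'}$, a direct integration yields $\norm{D^n}_{\infty,[0,T]}\leq \tfrac{1}{\gamma'+1}\,\norm{B^H}_{\gamma',[0,T]}\,n^{-\gamma'}$. Second I would establish a Hölder bound that is uniform in $n$: from the convolution representation $B^{H,n}_t-B^{H,n}_s=\int_0^{1/n} n(\tilde B^H_{t-u}-\tilde B^H_{s-u})\,\dif u$ one gets $\norm{B^{H,n}}_{\gamma',[0,T]}\leq\norm{B^H}_{\gamma',[0,T]}$, and hence $\norm{D^n}_{\gamma',[0,T]}\leq 2\,\norm{B^H}_{\gamma',[0,T]}$.

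Finally I would interpolate. For $\gamma<\gamma'$ and any $f$, splitting $|f_t-f_s|=|f_t-f_s|^{\gamma/\gamma'}\,|f_t-f_s|^{1-\gamma/\gamma'}$ and estimating the first factor by the $\gamma'$-seminorm (which produces exactly the factor $|t-s|^{\gamma}$) and the second by the sup-norm gives the standard interpolation inequality $\norm{f}_{\gamma,[0,T]}\leq 2\,\norm{f}_{\gamma',[0,T]}^{\gamma/\gamma'}\,\norm{f}_{\infty,[0,T]}^{1-\gamma/\gamma'}$. Applying this to $f=D^n$ with the two bounds above, the powers of $n$ combine as $(n^{-\gamma'})^{1-\gamma/\gamma'}=n^{-(\gamma'-\gamma)}$, which is precisely the claimed rate; one then takes $K_{T,\gamma,\gamma'}$ to be the resulting numerical constant times $\norm{B^H}_{\gamma',[0,T]}$, which is almost surely finite because $B^H$ has $\gamma'$-Hölder paths for every $\gamma'<H$. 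I expect the only genuine subtlety to be the behaviour near $t=0$, and the zero-extension introduced at the outset is exactly what removes it, making all of the above estimates hold uniformly over $[0,T]$ without an ad hoc boundary case.
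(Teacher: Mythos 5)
Your proof is correct and rests on exactly the same two ingredients as the paper's: the sup-norm rate $\norm{B^{H,n}-B^H}_{\infty,[0,T]}\lesssim \norm{B^H}_{\gamma',[0,T]}\,n^{-\gamma'}$ coming from the one-sided averaging (with the same zero-extension of $B^H$ to negative times) and the uniform $\gamma'$-H\"older bound on the difference. The only difference is packaging: the paper interpolates ``by hand'' via the case split $|t-s|\geq 1/n$ versus $|t-s|\leq 1/n$, whereas you invoke the standard interpolation inequality between the sup-norm and the $\gamma'$-H\"older seminorm, which yields the identical rate $n^{-(\gamma'-\gamma)}$.
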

\begin{proof}
Clearly, it is sufficient to consider the one dimensional case.  For $ t \leq 0$ define $B^H_t=0$. Fix $t,s\in [0,T]$ and $\gamma'\in (\gamma,H)$. First, consider the case $|t-s|\geq \frac 1 n$. Here, we have
 \begin{align*}
  |B^H_t-B^H_s-B^{H,n}_t+B^{H,n}_s| & \leq n \left|\int^t_{t-1/n}(B^H_u-B^H_t) \dif u\right|+n\left|\int_{s-1/n}^s (B^H_u-B^H_s) \dif u\right| \\ & \leq
  2\norm{B^H}_{\gamma',[0,T]} \frac{1}{n^{\gamma'}}\leq 2 \norm{B^H}_{\gamma',[0,T]}|t-s|^{\gamma} \frac{1}{n^{\gamma-\gamma'}}.
\end{align*}
Next, when $|t-s|\leq \frac 1 n $ one has
 \begin{align*}
  |B^H-B^H_s-B^{H,n}_t+B^{H,n}_s| & \leq  |B^H_t-B^H_s|+|B^{H,n}_t-B^{H,n}_s|\\
&  \leq \norm{B^H}_{\gamma',[0,T]}|t-s|^{\gamma'} +n \left|\int_{-\frac 1 n}^0 (B^{H}_{t+u}-B^{H}_{s+u}) \dif u \right| \\ & \leq 
 2\norm{B^H}_{\gamma',[0,T]}|t-s|^{\gamma'}\leq 2\norm{B^H}_{\gamma',[0,T]}|t-s|^{\gamma} \frac{1}{n^{\gamma'-\gamma}}.
\end{align*}
\end{proof}
%This lemma evidently entails
%\begin{proposition}
%For all $\gamma<1/2$ the following a.s. convergence holds
%$$
%g^n\overset{\CC^\gamma([0,T])}{\to} g, n\to \infty.
%$$
%\end{proposition}
%Now we prove the convergence of the corresponding L\'evy areas in the H\"older norm $\CC^{2\gamma}_2([0,T]^2)$. 

\begin{lemma}
 Let  $1/2 <\gamma < \gamma' <H$. Then, there exists an almost surely finite random variable $K_{T,\gamma,\gamma'}$ such that 
 $$
  \| \mathbf G^n - \mathbf{G} \|_{\CC^\gamma_2([0,T]^2)} \leq K_{T,\gamma,\gamma'} \cdot n^{-(\gamma'-\gamma)} .
 $$
\end{lemma}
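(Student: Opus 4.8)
The plan is to reduce the $\CC^\gamma_2$-estimate to the finitely many entries of the Lévy area that actually change, and to control each of them by Young's inequality together with the preceding Lemma. Since $g^n=(\operatorname{id},W,B^{H,n})$ and $g=(\operatorname{id},W,B^H)$ differ only in their last $\ell$ (fractional) coordinates, the increment $\mathbf{G}^n_{s,t}(p,q)-\mathbf{G}_{s,t}(p,q)$ vanishes whenever both $p,q$ index $\operatorname{id}$- or $W$-directions; in particular the purely Brownian (Stratonovich) entries cancel exactly. Hence only iterated integrals with at least one fractional direction survive, and by Remark \ref{rem_levy}(iii) every one of these is a Young integral, so the estimates of Section \ref{integratingfbm} are available.

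Before estimating, I would record the uniform Hölder bound $\norm{B^{H,n}}_{\gamma,[0,T]}\leq \norm{B^H}_{\gamma,[0,T]}+ K_{T,\gamma,\gamma'}$, valid for $\gamma<\gamma'<H$ and uniform in $n$, which follows at once from the triangle inequality and the previous Lemma. The decisive term is the fractional--fractional entry, which I would treat by adding and subtracting a ``mixed'' integral:
\begin{align*}
\mathbf{G}^n_{s,t}(p,q)-\mathbf{G}_{s,t}(p,q) & = \int_s^t \big(B^{H,n,(p)}_u-B^{H,n,(p)}_s\big)\dif\big(B^{H,n,(q)}-B^{H,(q)}\big)_u \\
& \quad +\int_s^t \Big[\big(B^{H,n,(p)}-B^{H,(p)}\big)_u-\big(B^{H,n,(p)}-B^{H,(p)}\big)_s\Big]\dif B^{H,(q)}_u.
\end{align*}
In each summand exactly one factor is the increment $B^{H,n,(\cdot)}-B^{H,(\cdot)}$, whose $\CC^\gamma$-seminorm is $\leq K_{T,\gamma,\gamma'}\,n^{-(\gamma'-\gamma)}$ by the previous Lemma, while the companion factor ($B^{H,n,(p)}$ resp.\ $B^{H,(q)}$) has an a.s.\ finite $\gamma$-Hölder norm uniform in $n$. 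Applying Young's inequality with both Hölder exponents equal to $\gamma$ (so that $2\gamma>1$) bounds each summand by $C\,K_{T,\gamma,\gamma'}\,n^{-(\gamma'-\gamma)}|t-s|^{2\gamma}$, and since $|t-s|^{2\gamma}\leq T^{\gamma}|t-s|^{\gamma}$ on $[0,T]$ this is the required order.

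The remaining entries are handled the same way. For an $\operatorname{id}$--fractional entry one factor is Lipschitz, so the bound is immediate; for a $W$--fractional entry the integrand/integrator splits as above and Young's inequality applies with exponents $\gamma$ and $\rho<1/2$, which requires $\gamma+\rho>1$ --- and this is exactly where the hypothesis $\gamma>1/2$ enters, since it lets one choose $\rho$ close enough to $1/2$. Summing the finitely many contributions and taking the supremum over $s\neq t$ yields $\norm{\mathbf{G}^n-\mathbf{G}}_{\CC^\gamma_2([0,T]^2)}\leq K_{T,\gamma,\gamma'}\,n^{-(\gamma'-\gamma)}$ with $K_{T,\gamma,\gamma'}$ a.s.\ finite (a finite combination of $\norm{B^H}_{\gamma,[0,T]}$, the Brownian Hölder norms, and the random variables furnished by the previous Lemma). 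I expect the main obstacle to be bookkeeping: verifying the bilinear add-and-subtract split for the fractional--fractional term so that one factor always carries the full decay rate, and confirming that the $W$--fractional cross terms remain Young-integrable, which hinges on $\gamma>1/2$.
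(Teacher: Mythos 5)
Your proposal is correct and follows essentially the same route as the paper: only the entries of the L\'evy area involving at least one fractional coordinate survive, the fractional--fractional entry is handled by the same bilinear add-and-subtract decomposition so that each summand carries one factor of $B^{H,n}-B^H$, and everything is closed by Young's inequality together with the preceding H\"older-rate lemma (plus the uniform-in-$n$ bound on $\norm{B^{H,n}}_{\gamma}$). The only cosmetic difference is that for the entry $\int_s^t (B^{H,(j)}_u-B^{H,n,(j)}_u - B^{H,(j)}_s+B^{H,n,(j)}_s)\,\dif W^{(i)}_u$ you apply Young's inequality directly with exponents $\gamma$ and $\rho<1/2$, whereas the paper first integrates by parts to reduce it to the previously treated cross term; both are valid.
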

\begin{proof}
 First, we prove the convergence the elements of the L\'evy areas which correspond to the smoothed fBm to the ones of fBm. Fix $t,s\in [0,T]$, $i,j\in \{1,\ldots,\ell\}$. Clearly, we have
 \begin{align*} 
 \Delta^{(1)}(s,t) & = \left  |\int_s^t (B^{H,(i)}_u-B^{H,(i)}_s)\dif B^{H,(j)}_u-\int_s^t( B^{H,n,(i)}_u-B^{H,n,(i)}_s)\dif B^{H,n,(j)}_u \right |
 \\ &  \leq
 \left |\int_s^t \big( ( B^{H,(i)}_u-B^{H,(i)}_s)-(B^{H,n,(i)}_u-B^{H,n,(i)}_u) \big) \dif B^{H,(j)}_u \right|  
 \\ & \qquad   +\left|\int_s^t ( B^{H,n,(i)}_u-B^{H,n,(i)}_s) \dif (B^{H,n,(j)}_u-B^{H,(j)}_u)\right|.
 \end{align*}
Set $Z_u^{n,(i)}=B^{H,(i)}_u-B^{H,n,(i)}_u$. Applying the Young inequality with $1/2< \mu  <H$  yields
\begin{align*}
 \left |\int_s^t (Z^n_u-Z^n_s)\dif B^{H,(j)}_u \right | & \leq C_{\mu} \norm{B^{H,(j)}}_{\mu}\norm{Z^{n,(i)}}_{\mu}|t-s|^{2\mu}, \\
 \left |\int_s^t (B^{H,n,(i)}_u-B^{H,n,(i)}_s)\dif (B^{H,n,(j)}_u-B^{H,(j)}_u)\right| & \leq C_\mu \norm{B^{H,n,(i)}}_\mu \norm{Z^{n,(j)}}_{\mu}|t-s|^{2\mu}, 
 \end{align*} i.e. we obtain
  \begin{align} 
 \frac{\Delta^{(1)}(s,t)}{|t-s|^{2\mu}} \leq  C_{\mu} \left( \norm{Z^{n,(i)}}_{\mu} \norm{B^{H,(j)}}_{\mu} +  \norm{Z^{n,(j)}}_{\mu} \norm{B^{H,(i)}}_{\mu} + \norm{Z^{n,(j)}}_{\mu} \norm{Z^{n,(i)}}_{\mu}
  \right). 
\end{align}

Now, we proceed with the parts that correspond to the  iterated integrals which involve the Wiener process and the smoothed fBm. Fix 
$i\in\{1,\ldots, m\}$ and $j\in\{1,\ldots, \ell\}$, $s,t\in [0,T]$. Again, applying  the Young inequality gives
\begin{align*}
\Delta^{(2)}(s,t)& =\left |\int_s^t (W^{(i)}_u-W^{(i)}_s) \dif (B^{H,(j)}_u -B^{H,n,(j)}_u) \right|  \\ & \leq
 C_{\lambda,\mu} \norm{Z^{n,(j)}}_\mu \norm{W^{(i)}}_\lambda |t-s|^{\lambda + \mu}
\end{align*}
with $0<\lambda<1/2, 0< \mu< H$ such that $\lambda + \mu >1$.
It is only left to deal with
\begin{align*} \Delta^{(3)}(s,t)&= 
\left |\int_s^t(B^{H,(j)}_u-B^{H,(j)}_s)\dif W^{(i)}_u-\int_s^t (B^{H,n,(j)}_u-B^{H,n,(j)}_s) \dif W^{(i)}_u \right |.
\end{align*}
But using the integration by parts formula for Young integrals we obtain
\begin{align*}
 \int_s^t (Z_u^{n,(j)}-Z_s^{n,(j)})\dif W^{(i)}_u=(Z^{n,(j)}_t-Z^{n,(j)}_s)(W^{(i)}_t-W^{(i)}_s)-\int_s^t( W^{(i)}_u-W^{(i)}_s )\dif Z^{n,(j)}_u.
\end{align*}
Using the previous step yields
$$
\Delta^{(3)}(s,t) \leq C_{\lambda,\mu} \norm{Z^{n,(j)}}_\mu \norm{W^{(i)}}_\lambda |t-s|^{\lambda + \mu}.
$$

The elements of the L\'evy area involving $t$ and the ``smoothed'' fBms are easily treated. Here we have
\begin{align*}
\Delta^{(4)}(s,t):= \left| \int_s^t (B_s^{H,(i)} - B_s^{H,n,(i)}) \dif s\right| \leq \norm{Z^{n,(i)}}_{\mu} |t-s|^{1+\mu}, \\
\Delta^{(5)}(s,t):= \left| \int_s^t s \dif  (B_s^{H,(i)} - B_s^{H,n,(i)})  \right| \leq \norm{Z^{n,(i)}}_{\mu} |t-s|^{1+\mu}.
\end{align*}
Setting now $\gamma=(\lambda +  \mu)/2$, the assertion follows from the  previous lemma. 
\end{proof}

\subsection{Constructing numerical methods for mixed equations}

The almost sure convergence in the ${\gamma}$-H\"older norm of
\begin{align} 
 X^{M,n}_t=x_0+\int_0^t \big( a(X^{M,n}_s)+c(X^{M,n}_s)  \dot{B}^{H,n}_s \big) \dif s+\int_0^t b(X^M_s) \difI W_s, \qquad t \in [0,T], \label{mixed_num_2}
\end{align}
with
$$  \dot{B}^{H,n}_t=   n(B^H_t-B^H_{(t-1/n)\vee 0}), \qquad t \in [0,T], $$ to $X^M$
can be exploited to construct and to analyse numerical methods for mixed equations, proceeding similar to \cite{DNT,Riedel}. In the latter references  approximations of rough SDEs have been obtained by discretising their Wong-Zakai approximations. For example, 
applying an Euler discretisation with stepsize $\Delta=1/n$ to \eqref{mixed_num_2} yields the approximation
$$ x_{k+1} = x_{k} + a(x_k) \Delta  + b(x_k)(W_{(k+1)\Delta} - W_{k\Delta}) + c(x_k)(B^H_{k\Delta} - B^H_{(k-1)\Delta}), \qquad k=0,1, \ldots $$
where $B^H_{-\Delta}=0$.
Equation \eqref{mixed_num_2} is an It\^{o} SDE with random coefficients,  the only technical difficulty being the unboundedness of $\dot{B}^{H,n}$. Using a localization procedure as e.g. in \cite{num_math}
and  standard estimates involving the It\=o isometry and Gronwall's lemma one can show that
$$  \sup_{k=0, \ldots ,\lceil T/n \rceil} |  X^{M,n}_{k \Delta} - x_{k}| \stackrel{P-a.s.}{\longrightarrow} 0, \qquad  n \rightarrow \infty.$$ Corollary \ref{app_mixed_cor} then implies the convergence of this skewed Euler scheme, i.e.
$$ \sup_{k=0, \ldots, \lceil T/n \rceil} |  X^{M}_{k \Delta} - x_{k}| \stackrel{P-a.s.}{\longrightarrow} 0, \qquad n \rightarrow \infty.$$

\subsection{The natural Euler scheme for rough SDEs}
Using the correction formula, one can establish the convergence of the ``natural'' Euler scheme
\begin{align}  \label{euler_rp}  x_{k+1} = x_{k} & + \Big( a(x_k)  + \frac 1 2 \sum_{i=1}^m \mathcal D^{(i)}_b b^{(i)}(x_k) \Big) \Delta \\ 
& + b(x_k)(W_{(k+1)\Delta} - W_{k\Delta}) + c(x_k)(B^H_{(k+1)\Delta} - B^H_{k\Delta}), \qquad k=0,1, \ldots \nonumber  \end{align}
for the rough SDE
\begin{align} \label{rough_num_2} 
 X^{R}_t=x_0+\int_0^t a(X^{R}_s) \dif s + \int_0^t b(X^R_s) \difr W_s + \int_0^t c(X^R_s) \difr B_s^H, \qquad t \in [0,T], 
\end{align}
at least for $m=\ell=1$.
The notion ``natural'' is based on the following observations: for $b=0$ equation \eqref{rough_num_2} is an SDE driven by fractional Brownian motion with Hurst parameter $H>1/2$, for which \eqref{euler_rp}
with $b=0$ is a convergent approximation, see e.g. \cite{Davie,Friz}, while for $c=0$  equation \eqref{rough_num_2} is a Stratonovich SDE, for which \eqref{euler_rp} with $c=0$ is again a convergent scheme, see e.g. \cite{KP}.

Using the results of \cite{ShevMishura-Euler} and Theorem \ref{transfer_mixed_rough} we have:

\begin{proposition} Let $a,b,c\in C^{2,\delta}_b(\R; \R)$. Moreover let
$  b'b  \in C^{2,\delta}_b(\R; \R)$ and $\inf_{x \in \mathbb{R}} c(x)>0$. Then there exists $C>0$ such that
$$ \sup_{k=0, \ldots, \lceil T/ \Delta \rceil} \left( \mathsf{E} | X_{k \Delta}^R - x_k|^2 \right)^{1/2} \leq C \cdot \big( \Delta^{1/2} + \Delta^{2H-1}  \big).$$
\end{proposition}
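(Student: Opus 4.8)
The plan is to reduce the statement to a known convergence rate for the Euler scheme of a mixed equation, using the correction formula as the bridge. Since $a,b,c\in C^{2,\delta}_b(\R;\R)$, the hypotheses of Theorem \ref{rptomixed_thm} are met, so applied with $m=\ell=1$ and $a^R=a$, $b^R=b$, $c^R=c$ it tells us that the solution $X^R$ of the rough equation \eqref{rough_num_2} coincides $P$-almost surely with the solution $X^M$ of the mixed equation
\begin{align*}
X^M_t = x_0 + \int_0^t \bigl(a(X^M_s)+\tfrac12\, b'b(X^M_s)\bigr)\,\dif s + \int_0^t b(X^M_s)\,\difI W_s + \int_0^t c(X^M_s)\,\dif B^H_s ,
\end{align*}
because $\mathcal{D}^{(1)}_b b^{(1)}=b'b$ in the scalar case. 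In particular $X^R_{k\Delta}=X^M_{k\Delta}$ $P$-a.s.\ at every grid point, so it suffices to bound $\bigl(\mathsf{E}|X^M_{k\Delta}-x_k|^2\bigr)^{1/2}$.

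Next I would identify the natural Euler scheme \eqref{euler_rp} as the standard left-point, forward-increment Euler scheme for this mixed equation. Writing $\tilde a=a+\tfrac12 b'b$, the recursion \eqref{euler_rp} reads
\begin{align*}
x_{k+1} = x_k + \tilde a(x_k)\,\Delta + b(x_k)\bigl(W_{(k+1)\Delta}-W_{k\Delta}\bigr) + c(x_k)\bigl(B^H_{(k+1)\Delta}-B^H_{k\Delta}\bigr),
\end{align*}
which is exactly the Euler discretisation obtained by freezing the coefficients at the left endpoint and using the forward increments of $s$, of $W$ (as an It\=o increment) and of $B^H$ (as a Young increment). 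Thus $x_k$ is precisely the Euler approximation of $X^M$ on the uniform grid of mesh $\Delta$, and the quantity we must control is the strong $L^2$ error of that scheme.

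It then remains to verify the hypotheses under which \cite{ShevMishura-Euler} establishes the $L^2$ rate $\Delta^{1/2}+\Delta^{2H-1}$ for the Euler scheme of such a mixed equation, and to read off the bound. The drift $\tilde a=a+\tfrac12 b'b$ of the mixed equation lies in $C^{2,\delta}_b$ precisely because both $a$ and $b'b$ are assumed to be in $C^{2,\delta}_b$; the coefficients $b$ and $c$ inherit their regularity from the standing assumptions, and the non-degeneracy condition $\inf_x c(x)>0$ carries over verbatim. Under these conditions \cite{ShevMishura-Euler} gives
\begin{align*}
\sup_{k=0,\ldots,\lceil T/\Delta\rceil}\bigl(\mathsf{E}|X^M_{k\Delta}-x_k|^2\bigr)^{1/2}\le C\bigl(\Delta^{1/2}+\Delta^{2H-1}\bigr),
\end{align*}
where $\Delta^{1/2}$ is the usual strong Euler rate of the It\=o (Brownian) part and $\Delta^{2H-1}$ is the rate of the Young (fractional) part for $H>1/2$. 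Combining this with the almost sure identity $X^R_{k\Delta}=X^M_{k\Delta}$ yields the claim.

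I expect the only genuine difficulty to be the careful matching of hypotheses, rather than any re-derivation. One must ensure that the corrected drift $\tilde a$ produced by the correction formula is smooth enough for the cited estimate (this is exactly why the assumption $b'b\in C^{2,\delta}_b$ is imposed, since $b'b$ need not otherwise retain the regularity of $b$), and that the ellipticity condition $\inf_x c(x)>0$ is the one under which the fractional contribution is controlled at rate $\Delta^{2H-1}$. The heavy analytic machinery — Garsia--Rodemich--Rumsey and Young estimates for the fractional part, together with It\=o-isometry and Burkholder--Davis--Gundy estimates for the Brownian part, as already used in Lemmata \ref{estimInt} and the subsequent ones — is encapsulated in \cite{ShevMishura-Euler} and need not be reproduced here.
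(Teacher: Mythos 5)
Your proposal is correct and follows essentially the same route as the paper, which proves this proposition precisely by combining the correction formula (identifying $X^R$ at the grid points with the solution of the mixed equation with drift $a+\tfrac12 b'b$, for which \eqref{euler_rp} is the standard Euler scheme) with the $L^2$ rate $\Delta^{1/2}+\Delta^{2H-1}$ from \cite{ShevMishura-Euler}. The only cosmetic difference is that you invoke the rough-to-mixed direction (Theorem \ref{rptomixed_thm}) while the paper cites the mixed-to-rough direction (Theorem \ref{transfer_mixed_rough}); either identification suffices, and your reading of the role of the hypotheses $b'b\in C^{2,\delta}_b$ and $\inf_x c(x)>0$ matches the paper's intent.
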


%%%%%%%%%%%%%%%%%%%%%%%%%%%%%%%%%%%%%%%%%%%%%%%%%%%%%%%%%%%%%%%%%%%
%%                                                               %%
%% No need for \maketitle.                                       %%
%%                                                               %%
%%%%%%%%%%%%%%%%%%%%%%%%%%%%%%%%%%%%%%%%%%%%%%%%%%%%%%%%%%%%%%%%%%%

%%%%%%%%%%%%%%%%%%%%%%%%%%%%%%%%%%%%%%%%%%%%%%%%%%%%%%%%%%%%%%%%%%%
%%                                                               %%
%% Please replace what follows by the body of your article       %%
%% (up to the bibliography):                                     %%
%%                                                               %%
%%%%%%%%%%%%%%%%%%%%%%%%%%%%%%%

%\ACKNO{We thank ...}

\end{document}